\Crefname{subsection}{Section}{Sections}
\crefname{subsection}{section}{sections}
\crefname{hypothesis}{Hypothesis}{Hypotheses}
\newcommand{\revised}[1]{\textcolor{black}{#1}}
\title{Bayesian D-Optimal Experimental Designs via Column Subset Selection\thanks{
AKS was supported by the National Science Foundation through the award DMS-1845406 and the Department of Energy through the award DE-SC0023188. Vishwas Rao and Srinivas Eswar are supported by the U.S. Department of Energy, Office of Science, Advanced Scientific Computing Research Program under contracts DE-AC02-06CH11357 and DE-SC0023188.}}
\author{Srinivas Eswar\thanks{Mathematics and Computer Science Division, Argonne National Laboratory. \email{seswar@anl.gov}.}
\and Vishwas Rao\thanks{Mathematics and Computer Science Division, Argonne National Laboratory. \email{vhebbur@anl.gov}.}
\and Arvind K.\ Saibaba\thanks{Department of Mathematics, North Carolina State University. \email{asaibab@ncsu.edu}.} }
\acrodef{oed}[OED]{\revised{Optimal Experimental Design}}
\acrodef{map}[MAP]{Maximum a posteriori}
\acrodef{cssp}[CSSP]{Column Subset Selection Problem}
\acrodef{svd}[SVD]{Singular Value Decomposition}
\acrodef{qrcp}[QRCP]{QR with Column Pivoting}
\acrodef{rrqr}[RRQR]{Rank-Revealing QR factorization}
\acrodef{srrqr}[sRRQR]{Strong Rank-Revealing QR factorization}
\acrodef{nla}[NLA]{Numerical Linear Algebra}
\acrodef{matvecs}[matvecs]{matrix-vector products}
\acrodef{gks}[GKS]{Golub-Klema-Stewart}
\acrodef{pde}[PDE]{Partial Differential Equation}
\acrodef{deim}[DEIM]{Discrete Empirical Interpolation Method}
\acrodef{bdeim}[B-DEIM]{Bayesian-Discrete Empirical Interpolation Method}
\acrodef{raf}[RAF-OED]{Randomized Adjoint-free OED}
\newcommand*{\addFileDependency}[1]{
  \typeout{(#1)}
  \@addtofilelist{#1}
  \IfFileExists{#1}{}{\typeout{No file #1.}}
}
\begin{document}

\maketitle

\begin{abstract}
This paper tackles optimal sensor placement for Bayesian linear inverse problems, a popular version of the more general \ac{oed} problem, using the D-optimality criterion. This is done by establishing connections between sensor placement and \ac{cssp}, which is a well-studied problem in \ac{nla}. In particular, we use the \ac{gks} approach which involves computing the truncated \ac{svd} followed by a pivoted QR factorization on the right singular vectors. The algorithms are further accelerated by using randomization to compute the low-rank approximation as well as for sampling the indices. The resulting algorithms are robust, computationally efficient, amenable to parallelization, require virtually no parameter tuning, and come with strong theoretical guarantees. One of the proposed algorithms is also adjoint-free which is beneficial in situations, where the adjoint is expensive to evaluate or is not available. 
Additionally, we develop a method for data completion without solving the inverse problem. Numerical experiments on model inverse problems involving the heat equation and seismic tomography in two spatial dimensions demonstrate the performance of our approaches.   
\end{abstract}
\begin{keywords}
\acl{cssp}, Sensor Placement, \acl{oed}, Bayesian inverse problems, Randomized algorithms. 
\end{keywords}

\begin{MSCcodes}
35R30, 62K05, 68W20, 65C60, 62F15
\end{MSCcodes}

\section{Introduction}
\label{sec:intro}
Inverse problems involve reconstructing parameters describing mathematical models from the data. They have wide-ranging applications in medical imaging, geosciences, and other areas of science and engineering.  The inverse problems we tackle are ill-posed and to address this, we use the Bayesian approach to inverse problems which produces a posterior distribution for the reconstruction by combining the likelihood and the prior distributions through the Bayes formula. The posterior distribution captures the uncertainty present in the reconstructions.  

A central problem in Bayesian inverse problems is to determine how to optimally collect data. This falls under the umbrella of \acf{oed}. A common instance of \ac{oed} is to determine the optimal location of a fixed number of sensors. The number of sensors and the locations they can be deployed in are limited due to budgetary and physical constraints. We consider the scenario that we have identified a set of $m$ candidate sensor locations and each sensor contributes to one piece of information; given a budget of $k$ sensors, find the best sensor locations. Put more plainly, choose $k$ ``best'' sensors out of $m$. In \ac{oed}, the notion of optimality is typically defined in terms of a criterion that quantifies the uncertainty in the reconstruction. In this paper, we focus on the D-optimality criterion which is the expected Kullback-Liebler divergence from the prior to the posterior distributions, and is a measure of information gain. For linear inverse problems with Gaussian priors, the D-optimal criterion has a closed-form expression that involves the determinant of the posterior covariance matrix. 

There are several challenges involved in sensor placement and \ac{oed}. Choosing $k$ out of $m$ sensors involves potentially ${m \choose k}$ comparisons, which is infeasible even for modest values of $m$ and $k$, and the optimal selected sensors may not even be unique. To compound these difficulties, even a single objective function evaluation is expensive since it requires many \ac{pde} solves and in the case of the D-optimality criterion, involves expensive determinant computations. There are many approaches in the literature (reviewed in \Cref{sec:prelims}) to address these challenges. 

In this paper, we take a radically different approach and view the problem through the lens of \acf{cssp}, which is a well-studied problem in \ac{nla}. To motivate the connection, we consider a matrix $\M{A}\in \R^{n\times m}$ and consider the problem of choosing the ``best'' $k$ columns of $\M{A}$. As will be discussed in \Cref{sec:prelims}, each column of $\M{A}$ corresponds to a sensor. Motivated by this connection, we develop a computational framework for sensor placement using tools from \ac{cssp} with provable guarantees for the D-optimality criterion and the computational cost.
The algorithms are very efficient and the cost is comparable to computing a truncated \ac{svd} of the forward operator preconditioned by the prior covariance matrix which is known to have low-rank. Our approach also suggests a method for data completion, i.e., filling in the missing values of the sensors at which data is not collected using the data that is actually collected. This data can be used by another inverse problem solution technique, e.g., filtered backpropagation. We briefly survey the contents of this paper and highlight our contributions.

\paragraph{Contributions and Features} This paper makes fundamental contributions to the theory and practice of D-optimal experimental design for Bayesian inverse problems. We summarize the main contributions of this paper and give a brief outline of the paper. Necessary background information is given in \Cref{sec:prelims} including a review of the literature (\Cref{ssec:ip_oed}).
\begin{enumerate}
    \item \textbf{Connections to \ac{cssp}}: In \Cref{sec:cssp}, we interpret sensor placement for D-optimal experimental design through the lens of \ac{cssp}, a framework in \ac{nla} for selecting the ``best'' $k$ columns out of $m$. We also draw connections to \ac{rrqr} and volume-maximization problems. The latter connection also allows us to establish that D-optimal sensor placement is NP-hard, and derive bounds on the performance of greedy algorithms. 
    \item \textbf{Algorithms}: The connection to \ac{cssp} allows us to develop many algorithms. The algorithms are based on the \ac{gks} framework and involve computation of the (truncated) \ac{svd}, followed by subset selection on the right singular vectors $\M{V}_k$ of a matrix $\M{A}$ (the details of this matrix are given in~\Cref{sec:prelims}).  The first set of algorithms uses pivoted QR on $\M{V}_k\t$ (\Cref{ssec:detcssp}). The second algorithm \ac{raf} (\Cref{sec:sensor}) is randomized and is adjoint-free and amenable to parallelization. The 
    final set of algorithms (\Cref{sec:rand}) uses randomized sampling and these approaches are cheaper than \ac{qrcp}.  The proposed algorithms are computationally efficient and require no tuning or input from the user apart from the desired number of sensors $k$.  
    \item \textbf{Analysis}: All the proposed algorithms come with rigorous justification of the bounds on the D-optimality criterion. For the randomized sampling approaches, the bounds hold with high probability. We also provide detailed analyses of the computational cost with complexity estimates. The dominant computational cost is the computation of a truncated SVD of $\M{A}$. Assuming each forward/adjoint operator evaluation is one \ac{pde} evaluation,  the cost of the proposed algorithms are $\sim 4k$ \ac{pde} solves (assuming randomized \ac{svd} with one subspace iteration), and \ac{raf} only requires $\sim k$ \ac{pde} solves.
    \item \textbf{Data completion}: In \Cref{sec:datacomp}, we present an approach called \ac{bdeim} and show how the measured data can be used to estimate the data at the sensor locations at which data is not collected. 
    We derive error analysis for the completed data and the error in the \ac{map} estimate using the completed data. 
    \item \textbf{Numerical experiments}: We perform detailed numerical experiments on model inverse problems involving recovering the initial condition from final time measurements, where the governing dynamics follow the heat equation and seismic tomography in two spatial dimensions. The experiments demonstrate the performance of our algorithms. 
\end{enumerate}
For ease of exposition, the proofs of the results are given in~\Cref{sec:proofs}. Additional background and results are given in \Cref{sec:appendix}.

\section{Preliminaries}
\label{sec:prelims}
This section reviews the notation used in this article and gives background information on Bayesian inverse problems, \ac{oed}, and rank-revealing QR factorizations.  
\subsection{Notation and matrix preliminaries}
Matrices are represented with bold uppercase letters like $\M{A}$ and vectors with bold lowercase letters like $\V{v}$.
\revised{Sets will be represented by uppercase letters like $S$.}
$\M{I}_n$ is the $n\times n$ identity matrix with $\V{e}_j$ as its $j^\text{th}$ column. We denote a selection operator $\M{S} = \bmat{\V{e}_{i_1} & \dots & \V{e}_{i_k}} \in \R^{n\times k}$, which contains columns from the identity matrix corresponding to the \revised{set of indices $S = \{i_1,\dots,i_k\}$}. \revised{We also employ Matlab-style indexing to address elements of vectors and matrices. For example, $\M{A}(:, S) = \M{A}\M{S}$ selects the columns of $\M{A}$ indexed by the set $S$. The complete set of indices is denoted by $\br{n} = \curly{1,\dots,n}$.} The vector of all ones is represented as $\V{e}$.

Let $\|\V{x}\|_2$ denote the 2-norm for a vector $\V{x}$ and $\|\M{A}\|_2$ denote the  spectral norm for a matrix $\M{A}$. If $\M{M}\in \R^{n\times n}$ is positive definite, we define the weighted norm $\|\V{x}\|_{\M{M}} = \sqrt{\V{x}\t\M{M}\V{x}}$. We use $\log$ to denote the natural logarithm and $\logdet(\M{M}) \equiv \log(\det(\M{M}))$, where $\det(\M{M})$ denotes the determinant of the matrix $\M{M}$. \revised{Let $\M{C},\M{D}\in \R^{n\times n}$ be symmetric matrices. We say that $\M{C} \preceq \M{D}$ (alternatively, $\M{D} \succeq \M{C}$) if $\M{D}-\M{C}$ is positive semidefinite. This is known as L\"owner partial ordering and additional properties are given in \Cref{sec:back}.}  

The \ac{svd} of $\M{A} \in \R^{n\times m}$ with $k \le \rank{\M{A}}$ is partitioned
\begin{equation}\label{eqn:svd}\M{A} = \bmat{ \M{U}_k & \M{U}_\perp} \bmat{\M{\Sigma}_k \\ &\M\Sigma_\perp }\bmat{\M{V}_k\t \\ \M{V}_\perp\t} ,\end{equation}
where $\M\Sigma_k\in \R^{k\times k}$ is invertible and contains the dominant singular values of $\M{A}$, and matrices $\M{U}_k\in \R^{n\times k}$ and $\M{V}_k\in \R^{m\times k}$ have orthonormal columns and contain the corresponding left and right singular vectors respectively. Similarly, $\M\Sigma_\perp$ contain the subdominant singular values with corresponding singular vectors in $\M{U}_\perp$ and $\M{V}_\perp$ respectively. For two matrices $\M{A} \in \R^{m\times n},\M{B} \in \R^{n\times p}$ for which the product $\M{AB}$ is defined, we have the singular value inequalities~\cite[Problem III.6.2]{bhatia2013matrix}
\[\sigma_j(\M{AB}) \le \sigma_j(\M{A})\sigma_1(\M{B}) \qquad 1 \le j \le \min\{m,p\}. \]

\subsection{Background on inverse problems and OED}\label{ssec:ip_oed}
Consider the measurement equation 
\begin{equation}\label{eqn:lininv}
\mathbf{d} = \M{F} \V{m} + \V{\epsilon} \,,
\end{equation}
where $\mathbf{d} \in \mathbb{R}^{m}$ is the data, $\mathbf{F} \in \mathbb{R}^{m \times n}$ is the composition of the observation operator and the forward model. The vector $\V{m}$ is the parameter to be reconstructed and is a discretized representation of a spatially-dependent function, so it is very high-dimensional. We assume throughout this paper that $m < n$, that is, the problem is underdetermined. The vector $\M\epsilon$ represents the observation noise, which we assume is Gaussian, that is, $\V{\epsilon} \sim \mathcal{N}\left(0, \M\Gamma_{\rm noise}\right)$. In the bulk of the paper, we consider the uncorrelated noise case $\M\Gamma_{\rm noise} =  \eta^2 \mathbf{I}_m$. An extension to the case of a diagonal covariance matrix $\M\Gamma_{\rm noise}$ is straightforward, but we avoid this to simplify the exposition. An extension to the general uncorrelated  case is left for future work. The inverse problem involves reconstructing the parameters $\V{m}$ from the measurements $\V{d}$. We will be more specific about the measurements when we discuss \ac{oed}.

As this is an ill-posed problem, a prevalent approach is to use the \revised{Bayesian paradigm~\cite{box2011bayesian,reich2019bayesian}. The Bayesian formulation of the inverse problem~\cite{stuart2010inverse,kaipio2005statistical,tenorio2011data,tarantola2005inverse}} first determines the prior information, which in our paper we take to be  Gaussian $\V{m} \sim \mathcal{N}(\V\mu_{\rm pr}, \M\Gamma_{\rm pr})$. Then using Bayes' rule, we derive an expression for the posterior distribution with density $\pi_{\rm post}(\V{m}|\V{d})$ 
\[ \begin{aligned}
    \pi_{\rm post}(\V{m}|\V{d}) = & \> \frac{\pi_{\rm like}(\V{d}|\V{m})\pi_{\rm pr}(\V{m}) }{\pi(\V{d})},\\
    \propto & \> \exp\left(-\frac{1}{2} \|\V{d}-\M{F}\V{m}\|_{\M\Gamma_{\rm noise}^{-1}}^2 -\frac12 \|\V{m}-\V{\mu}_{\rm pr}\|_{\M\Gamma_{\rm pr}^{-1}}^2\right). 
\end{aligned}
\]
Under the assumptions made thus far, the posterior distribution is Gaussian of the form $\mathcal{N}(\V{m}_{\rm post},\M\Gamma_{\rm post})$, where the covariance matrix $\M\Gamma_{\rm post} \equiv ( \M{F}^\Tra\M\Gamma_{\rm noise}^{-1} \M{F} + \M\Gamma_{\rm pr}^{-1})^{-1}$ and the posterior mean is 
\begin{equation}\label{eqn:map}\V{m}_{\rm post} \equiv \M\Gamma_{\rm post}(\M{F}\t (\M\Gamma_{\rm noise}^{-1} \V{d})  + \M\Gamma_{\rm pr}^{-1}\V{\mu}_{\rm pr}). \end{equation}

We consider the sensor placement problem which is one flavor of \ac{oed}. See~\cite{ucinski2005optimal,pukelsheim2006optimal,alexanderian2021optimal,chaloner1995bayesian} for good introductions to this topic.  In this version of the problem, there are $m$ candidate sensor locations $\{\V{x}_1,\dots,\V{x}_m\}$, at which we can consider placing the sensor. But due to budgetary or physical considerations, we can only deploy $k$ sensors and, thus, we would like to pick the ``best'' $k$ locations, out of $m$, to place our sensors.
\revised{This can be encoded as a set of indices $S = \Curly{i_1,\dots,i_k}$ indicating the selected sensors.}
To determine the optimal sensor locations we solve the optimization problem
\begin{equation}\label{eqn:discopt}
    \revised{\min_S~\phi(S),~\text{subject to}\> \abs{S} \le k.}
\end{equation}
\revised{Here $\phi(S)$} is an objective function determining sensor placement \revised{and $S$ is the set of ${m \choose k}$ indices}.
This is a constrained binary optimization problem for determining the sensor locations. Popular criteria for OED selection are the Bayesian A-optimality criterion and the D-optimality criterion which, for linear inverse problems, amounts to computing the trace and log-determinant of the posterior covariance matrices. In this paper, we focus on the D-optimal criterion, which measures the information gain from the prior to the posterior distribution; equivalently, it is the expected Kullback-Liebler divergence from the prior to the posterior distributions~\cite{alexanderian2018efficient}. For the present problem setting, the criterion takes the form 
\revised{
\begin{equation}
\phi(S) =  \logdet(\M{I}+\M{A}(:, S)\M{A}(:, S)\t),
\end{equation}
where the matrix $\M{A}$ is given by
\begin{equation}
    \M{A} \equiv \M\Gamma_{\rm pr}^{1/2}\M{F}\t\M\Gamma_{\rm noise}^{-1/2} \in \R^{n\times m}.
\end{equation}
}
\revised{For simplicity of notation, we define 
\begin{equation}
    \label{eqn:doptfull}
    \revised{\Psi(\M{G})} \equiv \logdet(\M{I}+\M{GG}\t), 
\end{equation}
so that $\phi(S) = \Psi(\M{A}(:,S))$.
}

The columns of $\M{A}$ correspond to the $m$ sensors, and selecting $k$ columns is tantamount to selecting sensors.  This is the key insight that we exploit in this article. Throughout this paper we will assume that the number of sensors $k$ is smaller than the rank of $\M{A}$.
Note that the D-optimality criterion can be expressed in terms of the posterior and the prior covariance matrices as \revised{$\phi([m]) = \Psi(\M{A}) = -\logdet(\M\Gamma_{\rm post}) + \logdet(\M\Gamma_{\rm pr}).$}
\paragraph{Challenges and related work} There are several challenges associated with \ac{oed}: first, forming the posterior covariance matrix explicitly is infeasible, so even a single evaluation of these \ac{oed} criteria is infeasible, and second, the optimization over \revised{the set of sensors} is a constrained binary optimization problem which is challenging to solve. \revised{A detailed review of OED and its challenges is outside the scope of this paper, but further details can be found in~\cite{chaloner1995bayesian,alexanderian2021optimal,huan2024optimal}. }

To tackle the first challenge, one can use randomized techniques to efficiently evaluate the objective functions, which we also adopt in this paper.  There are several strategies to address the second challenge. One approach is to use greedy approaches in which we sequentially select the next best sensor location given a current selection of sensors. Greedy approaches are popular when it comes to sensor placement since they are relatively simple to implement but are typically suboptimal. Justifications for greedy approaches typically rely on the connection to super-modularity (or sub-modularity), see e.g.,~\cite[Section 4.8]{alexanderian2021optimal} and~\cite{jagalur2021batch}. \revised{Another approach is to relax the optimization problem to a continuous optimization problem (see, e.g.,~\cite{haber2008numerical}) with an appropriate regularization term to enforce sparsity. The continuous optimization problem is then solved using gradient-based optimization techniques.} 
Finally,  the optimized weights are then thresholded to obtain binary designs.
Related to this approach, a method that avoided the use of adjoints was proposed in~\cite[Chapter 5]{herman2020design}; see also~\cite[Section 4.5]{alexanderian2021optimal}. 
Another recent approach involves interpreting the binary designs as Bernoulli random variables and expressing the objective function as an expectation, thus allowing the use of stochastic optimization techniques~\cite{attia2022stochastic}. 

In contrast to the previous approaches, this article takes a fresh perspective by viewing \ac{oed} as a problem of selecting $k$ columns from $\M{A}$ out of $m$. This allows us to adapt existing techniques for \ac{cssp}, including high-quality software implementations, to \ac{oed}. The resulting algorithms are efficient, robust, and require virtually no parameter tuning. The performance of the algorithms comes with strong theoretical guarantees. The algorithms rely on low-rank computations of $\M{A}$ which are accelerated using randomized techniques. The closest works to ours are \cite{wu2023fast,wu2023offline}, which also exploit low-rank approximations and swapping algorithms based on leverage scores, but these papers do not explore the connections to \ac{cssp} and give theoretical guarantees. 

\paragraph{Other related work} 
\revised{
While not directly related to \ac{oed}, the notion of algorithmic leveraging~\cite{ma2015statistical} is used to subsample (and then appropriately scale) the data sets and matrices to reduce the computational cost. 
For example, leverage score-based methods have been used to provide scalable regression estimators via sampling~\cite{zhong2023model,ma2015statistical}.
Similarly, a related approach for data reduction is in~\cite{wang2019information}.
However, in contrast, \ac{oed} typically seeks to reduce data because of physical or budgetary constraints, rather than computational considerations.  Another difference compared to the previous approaches is that they do not address the Bayesian setting, which is the focus of this paper.}
\revised{
\ac{cssp} methods have applications beyond compressing a matrix by selecting a subset of its columns \cite{goreinov1997theory}.
It has been used for clustering \cite{boutsidis2014randomized}, data analysis \cite{mahoney2009cur}, feature selection \cite{boutsidis2009unsupervised}, multipoint boundary value problems \cite{de2007subset,de2011note}, image applications \cite{cai2021robust}, graph signal processing \cite{tsitsvero2016signals}, among others.
These works only mention \ac{oed} in passing and do not establish the various connections to \ac{cssp} as in this work.
}

\subsection{Rank-revealing QR factorizations}
The pivoted QR factorization of a matrix $\M{M} \in \Rn{m \times n}$ with $m \ge n$ with $\rank{\M{M}}  = k \le n $ is
\begin{equation}
    \M{M}\M{\Pi} = \M{Q}\M{R} = 
   \bmat{\M{Q}_1 & \M{Q}_2} 
    \begin{bmatrix}
    \M{R}_{11} & \M{R}_{12} \\
            & \M{R}_{22}
    \end{bmatrix},
\label{eqn:qr}
\end{equation}
where $\M{Q} \in \Rn{m \times m}$ is orthogonal, $\M{R}_{11} \in \Rn{k \times k}$ is upper triangular with nonnegative diagonal elements, $\M{R}_{12}\in \Rn{k \times \pr{n-k}}$, $\M{R}_{22} \in \Rn{\pr{m-k}\times\pr{n-k}}$, and $\M{\Pi} \in \Rn{n \times n}$ is a permutation matrix.
The aim is to select a permutation matrix that separates the linearly dependent columns of $\M{M}$ from the linearly independent ones.
This notion was made more precise in Gu and Eisenstat~\cite{gu1996efficient}.

A factorization is called a \acfi{srrqr} if it satisfies the following criteria (for $1\le i \le j$ and $1\le j \le n-k$)
\begin{equation} 
\sigma_i (\M{M}) \ge \sigma_i\pr{\M{R}_{11}} \ge \frac{\sigma_i\pr{\M{M}}}{p_1(k,n)}
\quad\text{and}\quad
\sigma_{k+j}(\M{M}) \le \sigma_j\pr{\M{R}_{22}} \le \sigma_{k+j}\pr{\M{M}}p_1(k,n)
\label{eqn:svals_bounds}
\end{equation}
and
\begin{equation}
\abs{\pr{\M{R}_{11}^{-1}\M{R}_{12}}_{ij}} \le p_2\pr{k,n},
\label{eqn:inv_bounds}
\end{equation}
for $1 \le i \le k$ and and $1 \le j \le n-k$ \cite{gu1996efficient}.
Here $p_1(k,n)$ and $p_2(k, n)$ are functions bounded by low-degree polynomials in $k$ and $n$. The upper bound for $\M{R}_{11}$ and lower bound for $\M{R}_{22}$ hold for any pivoted QR factorization because of interlacing inequalities. 
Gu and Eisenstat~\cite{gu1996efficient} present algorithms that, given $f\ge 1$, find a $\M{\Pi}$ for which \cref{eqn:svals_bounds,eqn:inv_bounds} hold with
\begin{equation}
p_1(k, n) = \sqrt{1 + f^2k(n-k)}
\quad\text{and}\quad
p_2(k, n) = f.
\label{eqn:qbounds}
\end{equation}
When $f > 1$, these methods take $O\pr{\pr{m + n \log_f n}n^2}$ floating point operations (flops).
This reduces to $O(mn^2)$ flops when $f$ is a small power of $n$.
For the short and wide case, i.e. where $m < n$, the operation count is $O(m^2n\log_fn)$ flops~\cite{avron2013faster}.

While the asymptotic complexity of \ac{srrqr} is similar to that of standard QR factorization, it is considerably more complicated to implement efficiently on modern computers.
This is because the best permutation order does not have an optimal substructure, i.e., the order for the best $k$ columns need not contain all of the best $k-1$ columns.
Therefore, a lot of column swapping occurs hurting the efficiency of such algorithms.

The popular \ac{qrcp} was first introduced by Businger and Golub to find a \acl{rrqr} \cite{businger1965linear}.
It is a simple modification of the ordinary QR factorization, where at every step the column with the largest norm in $\M{R}_{22}$ is pivoted to the front and orthogonalized. 
\ac{qrcp} takes about $4mnk - 2k^2\pr{m+n} + 4k^3/3$ flops and is a standard function in many linear algebra packages~\cite{quintana1998blas}.
This algorithm works well in practice, but there are examples where it fails to satisfy \cref{eqn:svals_bounds,eqn:inv_bounds}.
In the worst case, \ac{qrcp} may achieve exponential bounds with $p_1(k, n) = \sqrt{n-k}\cdot2^k$ and $p_2(k, n) = 2^{k-1}$~\cite{gu1996efficient}.

\section{Column subset selection for OED}
\label{sec:cssp}
In \Cref{ssec:oed_cssp}, we first give an interpretation of sensor placement in terms of \ac{cssp}. We give bounds for the D-optimality criterion in \Cref{ssec:struct} which motivates the algorithms in \Cref{ssec:detcssp}. 

\subsection{Interpreting OED as CSSP}\label{ssec:oed_cssp}
Consider the matrix $\M{A}\in \mathbb{R}^{n\times m}$ and recall that the columns of $\M{A}$ correspond to the number of design variables (e.g., sensors). We argue that finding the ``best'' $k$ sensors out of $m$ is closely related to identifying $k$ ``best'' columns of $\M{A}$. 
The D-optimal criterion seeks to maximize 
\begin{equation}\label{eqn:dopt}
\revised{\phi(S) \equiv \Psi(\M{C})   = \logdet(\M{I}+\M{C}\M{C}^\Tra)}
\end{equation} over the set of all matrices $\M{C}$ (containing columns from $\M{A}$ \revised{indexed by the set $S$ with $\abs{S}=k$ such that $\M{C}=\M{A}(:, S)$}; alternatively all $m\times m$ permutation matrices $\M\Pi$ such that $\M{C} = \M{A\Pi}(:,1:k)$). We will discuss interpretations in terms of maximum-volume, \ac{rrqr}, and \ac{gks} approaches; the algorithms we develop are in the \ac{gks} framework.

\paragraph{Connection to maximum-volume} A related problem in \ac{nla} is finding the submatrix of a matrix with maximum volume, which was shown to be an NP-hard problem~\cite{civril2009selecting}. Given a matrix $\M{M}\in \R^{n\times m}$, the volume of a matrix is the volume of the  $n$-dimensional parallelepiped formed by the columns of $\M{M}$; it has the following explicit formulas
\[ \vol(\M{M}) \equiv \sqrt{\det(\M{M}\t\M{M})} = \prod_{j=1}^{\min\{m,n\}}\sigma_j(\M{M}). \]
To see the connection to the maximum volume submatrix problem, observe that by the Sylvester determinant identity \[ \det(\M{I}+\M{AA}\t) = \det(\M{I}+\M{A}\t\M{A}) =   \det(\bmat{\M{I} & \M{A}\t} \bmat{\M{I} \\ \M{A}})  = \vol \left( \bmat{\M{I}\\ \M{A}} \right)^2.\] That is, solving the D-optimal optimization problem is a special case of maximum-volume estimation which is known to be NP-hard (\cite[Theorem 4]{civril2009selecting}). However, that does not readily establish that D-optimal sensor placement is NP-Hard but we give a proof for the sake of completion. On the other hand, classical \ac{oed} is known to be NP-hard~\cite{welch1982algorithmic}. 
\begin{proposition}\label{prop:doptnp}
    \revised{The optimization problem of optimizing the objective function~\eqref{eqn:dopt} over all index sets $S$ corresponding to $k$ columns from $\M{A}$ is NP-hard.}
\end{proposition}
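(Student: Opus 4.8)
The plan is to prove NP-hardness by a reduction \emph{from} the maximum-volume column subset problem, rather than by exhibiting the D-optimal problem as a special case of it (which, as the text notes, would not suffice since special cases can be easier). Recall that given a matrix $\M{B}\in\R^{n\times m}$ with integer entries and a target $k\le\rank{\M{B}}$, selecting the $k$ columns of $\M{B}$ of maximum volume is NP-hard~\cite[Theorem 4]{civril2009selecting}. Given such an instance, I would construct in polynomial time the D-optimal instance $\M{A} = t\,\M{B}$ for a scalar $t>0$ fixed below, and feed $\M{A}$ to the optimization problem~\eqref{eqn:dopt}. Since scaling preserves rank, $\rank{\M{A}}\ge k$ as required.

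For a $k$-subset $S\subseteq\{1,\dots,m\}$, write $\M{B}_S$ for the corresponding $n\times k$ submatrix and let $\lambda_1(S),\dots,\lambda_k(S)\ge 0$ denote the eigenvalues of the Gram matrix $\M{B}_S\t\M{B}_S$. Using the Sylvester identity $\det(\M{I}+\M{CC}\t)=\det(\M{I}+\M{C}\t\M{C})$ with $\M{C}=t\M{B}_S$, the exponentiated objective becomes
\[ \exp\!\big(\phi_D(t\M{B}_S)\big) = \det(\M{I}_k + t^2\M{B}_S\t\M{B}_S) = \prod_{i=1}^k \big(1+t^2\lambda_i(S)\big) = \sum_{j=0}^k t^{2j} e_j(S), \]
where $e_j(S)$ is the $j$-th elementary symmetric polynomial in the $\lambda_i(S)$. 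The key observation is that the top coefficient is $e_k(S)=\det(\M{B}_S\t\M{B}_S)=\vol(\M{B}_S)^2$, precisely the quantity to be maximized, while $e_0(S)=1$. Thus, as $t\to\infty$, $\phi_D(t\M{B}_S)$ is ordered according to $\vol(\M{B}_S)$.

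It remains to pin down $t$ so that this asymptotic ordering already holds, uniformly over all $\binom{m}{k}$ subsets. Because $\M{B}$ is integer-valued, each $\vol(\M{B}_S)^2$ is a nonnegative integer, so any two distinct values differ by at least $1$; meanwhile the lower-order coefficients obey $e_j(S)\le\binom{k}{j}\|\M{B}\|_F^{2j}\le 2^k\|\M{B}\|_F^{2k}$. A short calculation then shows that choosing, say, $t^2 = 1 + 2^{k+1}\|\M{B}\|_F^{2k}$ forces the $t^{2k}$-term to strictly dominate the sum of all lower-order terms whenever two subsets have distinct volumes; in particular any full-rank subset ($e_k>0$) beats any rank-deficient one ($e_k=0$). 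The bit-length of this $t$ is polynomial in the input size, so the reduction runs in polynomial time, and the subset maximizing $\phi_D(t\M{B}_S)$ is a maximum-volume subset of $\M{B}$ (ties among maximum-volume subsets only affect which optimal subset is returned). Hence a polynomial-time optimal solver for~\eqref{eqn:dopt} would solve the maximum-volume problem in polynomial time, establishing NP-hardness.

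The hard part will be the quantitative step: certifying that the scaling $t$ can be chosen with polynomially bounded bit-length while guaranteeing the leading coefficient dominates \emph{uniformly} across all subsets, together with a clean treatment of ties and rank-deficient subsets. Everything else (the determinant identity and the elementary-symmetric expansion) is routine.
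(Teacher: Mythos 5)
Your proof is essentially correct, but it takes a genuinely different route from the paper's. The paper does not reduce from the maximum-volume problem at all: it reduces directly from exact cover by 3-sets (X3C), building the $3m\times n$ matrix with entries $1/\sqrt{3}$ on the incidence pattern so that every column has unit norm, and then proving a standalone lemma (via Hadamard's inequality plus a strict-convexity argument) that for any $\M{Z}$ with unit-norm columns one has $\logdet(\M{I}+\M{Z}\M{Z}\t)\le k\log 2$ with equality \emph{iff} the columns are orthonormal. The X3C instance is then a yes-instance iff the optimum of $\phi_D$ equals exactly $m\log 2$, so no scaling, no integrality gap, and no bit-length bookkeeping are needed. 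Your reduction instead transfers hardness from MAX-VOL as a black box by scaling $\M{A}=t\M{B}$ and forcing the $t^{2k}e_k(S)$ term to dominate the elementary-symmetric expansion; this is more modular (it would work from any family of integer hard instances of MAX-VOL) but it is exactly the quantitative step you flag that needs care. Three fixable points: (i) the hard instances of \cite[Theorem 4]{civril2009selecting} have entries in $\{0,1/\sqrt{3}\}$, so you must rescale by $\sqrt{3}$ to justify integrality (this preserves the volume ordering); (ii) $t$ itself should be taken as a rational, e.g.\ an integer with $t^2\ge 1+2^{k+1}\|\M{B}\|_F^{2k}$, since $\sqrt{1+2^{k+1}\|\M{B}\|_F^{2k}}$ is generally irrational; and (iii) as written, bounding each $e_j(S)$ by $2^k\|\M{B}\|_F^{2k}$ and summing $k$ terms costs an extra factor of $k$, so either inflate $t^2$ by $k$ or use the sharper bound $\sum_{j=0}^{k}e_j(S)=\prod_i(1+\lambda_i(S))\le(1+\|\M{B}\|_F^2)^k\le 2^k\|\M{B}\|_F^{2k}$, under which your stated $t^2$ does suffice. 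With these repairs the argument goes through and the bit-length of $t$ remains polynomial; the paper's X3C route simply sidesteps all of this by exploiting the unit-norm structure to characterize the optimum exactly.
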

\begin{proof}
    The proof is an extension of the proof technique of~\cite[Theorem 4]{civril2009selecting} and is, therefore, relegated to \Cref{sec:nphard}. 
\end{proof}

This connection to maximum volume can also be used to develop a greedy selection approach to maximize $\vol(\bmat{\M{I} & \M{A}\t})$. The details of this approach are given in~\cite[Algorithm 1]{civril2009selecting}. It can be shown via~\cite[Theorem 11]{civril2009selecting} that the selected columns \revised{$S$ satisfy}
\revised{\[ \phi(S) \le \phi(S^{\rm opt} ) \le 2\log(k!)+ \phi(S). \]}
Here, \revised{$S^{\rm opt}$ denotes the index set corresponding to an optimal set of columns (recall this is not unique).} We give some alternative approaches based on maximum-volume in \Cref{ssec:maxvol}.
\paragraph{Connection to RRQR} To understand the connection of sensor placement to RRQR, consider the pivoted QR factorization of $\M{A}$
\[\M{A} \bmat{\M{\Pi}_1 & \M\Pi_2} = \bmat{\M{C} & \M{T}} = \bmat{\M{Q}_1 & \M{Q}_2} \bmat{\M{R}_{11} & \M{R}_{12} \\  & \M{R}_{22}}.  \]
From the relation $\M{C} = \M{Q}_1\M{R}_{11}$, it is easy to see that $$\logdet(\M{I}+\M{C}\M{C}^\Tra) = \logdet(\M{I}_k+\M{R}_{11}\M{R}_{11}^\Tra).$$ 
Therefore, this relation shows that maximizing 
\revised{$\logdet(\M{I}+\M{C}\M{C}^\Tra)$}
is the same as maximizing $\logdet(\M{I}_k+\M{R}_{11}\M{R}_{11}^\Tra).$ The \ac{srrqr} algorithm attempts to maximize $\det(\M{R}_{11})$ by interchanging  the most ``dependent'' column of $\M{R}_{11}$ (from index $1\le i \le k)$, or the most independent column of $\M{R}_{22}$ (index $k+1 \le i \le n$). It may be possible to modify \ac{srrqr} to instead search for a maximizer of $\logdet(\M{I}_k+\M{R}_{11}\M{R}_{11}^\Tra).$ However, we did not pursue this approach. One reason is that the \ac{srrqr} approach implicitly assumes that the entries of $\M{A}$ are available explicitly. But in the applications we consider, we can only perform \acf{matvecs} with $\M{A}$ (and $\M{A}^\Tra$), so we need to use matrix-free approaches.

\paragraph{The \acf{gks} approach} Instead of \ac{rrqr}, we follow the \ac{gks} approach, which has two stages. In the first stage, a truncated \ac{svd} of $\M{A}\approx \M{U}_k\M\Sigma_k\M{V}_k\t$ is computed, where $\M{V}_k \in \R^{m\times k}$ contains the right singular vectors corresponding to the largest singular values of $\M{A}$. In the second stage, we partition $\M{V}_k\t$ as 
\begin{equation}\label{eqn:vkpart} \M{V}_k\t \bmat{\M\Pi_1 & \M\Pi_2}  = \bmat{\M{V}_{11} & \M{V}_{12}}. \end{equation}
Here $\M\Pi_1\in \R^{n\times k}$ and we define $\M{C} = \M{A\Pi}_1$. 
Assuming that $\M{V}_{11}$ is nonsingular, it can be shown that~\cite[Theorem 5.5.2]{golub2012matrix}
\[
    \frac{\sigma_k(\M{A})}{\|\M{V}_{11}^{-1}\|_2} \le \sigma_k(\M{C}) \le \sigma_k(\M{A}).
\]
Thus, the smallest singular value of $\sigma_k(\M{C})$ is close to $\sigma_k(\M{A})$, except for the factor $\|\M{V}_{11}^{-1}\|_2$. The lower bound clearly identifies the factor $\|\M{V}_{11}^{-1}\|_2$; since $\M{V}_{11}\t$ is an invertible submatrix of $\M{V}_k$ it follows that  $\|\M{V}_{11}^{-1}\|_2 \ge 1$. Therefore, one goal is to identify a permutation matrix $\M\Pi$ such that $\|\M{V}_{11}^{-1}\|_2$ is as close to $1$ as possible; in other words, we want to find a set of $k$ well-conditioned columns of $\M{V}_k\t$. This analysis casts light only on the smallest singular value of $\M{C}$ but the D-optimality criterion involves all the singular values of $\M{C}$, which we address in the next section. In contrast to \ac{rrqr}, \ac{gks} works with the truncated \ac{svd} which can be computed in a matrix-free manner and this is more suitable to applications involving PDEs.

\subsection{Structural bounds on the D-optimality of \texorpdfstring{$\M{C}$}{C}}\label{ssec:struct}

We now derive bounds for the D-optimal criterion, when the columns $\M{C} = \M{A\Pi}_1 = \revised{\M{A}(:,S)}$ have been computed using the \ac{gks} approach. 

\begin{theorem} 
\label{thm:aksbounds}
\revised{Let $\M{A} \in \mathbb{R}^{n\times m}$ with $k \le \rank{\M{A}}$. Then for any permutation $\M{\Pi}$ such that $\rank{\M{V}_{11}}= k$ and  $\M{C} = \M{A}\M{\Pi}_1 = \M{A}(:, S)$ we have, 
\[\Psi(\M{\Sigma}_k/\norm{\M{V}_{11}^{-1}}_2) \le \phi(S) \le \phi(S^{\rm opt}) \le \Psi(\M{\Sigma}_k) \le \phi(\br{m}).\]}
\end{theorem}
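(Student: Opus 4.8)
The plan is to reduce every term in the chain to a sum of logarithms of singular values. Since $\M{C}\in\R^{n\times k}$ has at most $k$ nonzero singular values, $\phi_D(\M{C}) = \logdet(\M{I}+\M{C}\M{C}\t) = \sum_{j=1}^{k}\log(1+\sigma_j^2(\M{C}))$, and likewise $\phi_D(\M{\Sigma}_k) = \sum_{j=1}^{k}\log(1+\sigma_j^2(\M{A}))$, $\phi_D(\M{\Sigma}_k/\norm{\M{V}_{11}^{-1}}_2) = \sum_{j=1}^{k}\log(1+\sigma_j^2(\M{A})/\norm{\M{V}_{11}^{-1}}_2^2)$, and $\phi_D(\M{A}) = \sum_{j=1}^{\min\{m,n\}}\log(1+\sigma_j^2(\M{A}))$. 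Once this is in place, each inequality becomes a termwise comparison of singular values combined with the monotonicity of $t\mapsto\log(1+t^2)$.

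The three right-most inequalities are routine. For $\phi_D(\M{\Sigma}_k)\le\phi_D(\M{A})$, the second sum contains the first plus the extra nonnegative terms $\log(1+\sigma_j^2(\M{A}))\ge 0$ for $j>k$. For $\phi_D(\M{C}^{\rm opt})\le\phi_D(\M{\Sigma}_k)$, I would write any selected submatrix as $\M{C}=\M{A}\M{S}$ with $\M{S}$ a selection matrix ($\sigma_1(\M{S})=1$) and invoke the singular value inequality $\sigma_j(\M{A}\M{S})\le\sigma_j(\M{A})\sigma_1(\M{S})=\sigma_j(\M{A})$ recorded in the preliminaries; this gives $\phi_D(\M{C})\le\phi_D(\M{\Sigma}_k)$ for \emph{every} column subset, in particular for $\M{C}^{\rm opt}$. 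The bound $\phi_D(\M{C})\le\phi_D(\M{C}^{\rm opt})$ is just the definition of $\M{C}^{\rm opt}$ as a maximizer.

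The heart of the argument is the left-most inequality, which sharpens the GKS bound $\sigma_k(\M{A})/\norm{\M{V}_{11}^{-1}}_2\le\sigma_k(\M{C})$ from the smallest singular value to all of them; concretely I would prove $\sigma_j(\M{C})\ge\sigma_j(\M{A})/\norm{\M{V}_{11}^{-1}}_2$ for $j=1,\dots,k$. I would first pass to the rank-$k$ part. Write $\M{C}_k\equiv\M{U}_k\M{\Sigma}_k\M{V}_{11}$ (the selected columns of $\M{U}_k\M{\Sigma}_k\M{V}_k\t$) and $\M{E}\equiv\M{U}_\perp\M{\Sigma}_\perp\M{V}_\perp\t\M{\Pi}_1$, so $\M{C}=\M{C}_k+\M{E}$. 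The key observation is that these two pieces have orthogonal column spaces: $\M{C}_k\t\M{E}=\M{V}_{11}\t\M{\Sigma}_k(\M{U}_k\t\M{U}_\perp)\M{\Sigma}_\perp\M{V}_\perp\t\M{\Pi}_1=\M{0}$ since $\M{U}_k\t\M{U}_\perp=\M{0}$. Hence the cross terms vanish and $\M{C}\t\M{C}=\M{C}_k\t\M{C}_k+\M{E}\t\M{E}\succeq\M{C}_k\t\M{C}_k$, so Weyl's monotonicity theorem yields $\sigma_j(\M{C})\ge\sigma_j(\M{C}_k)$.

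Finally I would lower bound $\sigma_j(\M{C}_k)$. Because $\M{U}_k$ has orthonormal columns, $\sigma_j(\M{C}_k)=\sigma_j(\M{\Sigma}_k\M{V}_{11})$, and since $\M{V}_{11}$ is nonsingular we may factor $\M{\Sigma}_k=(\M{\Sigma}_k\M{V}_{11})\M{V}_{11}^{-1}$. Applying the product inequality once more with $\sigma_1(\M{V}_{11}^{-1})=\norm{\M{V}_{11}^{-1}}_2$ gives $\sigma_j(\M{A})=\sigma_j(\M{\Sigma}_k)\le\sigma_j(\M{\Sigma}_k\M{V}_{11})\norm{\M{V}_{11}^{-1}}_2=\sigma_j(\M{C}_k)\norm{\M{V}_{11}^{-1}}_2$, i.e. $\sigma_j(\M{C}_k)\ge\sigma_j(\M{A})/\norm{\M{V}_{11}^{-1}}_2$. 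Chaining with $\sigma_j(\M{C})\ge\sigma_j(\M{C}_k)$ and summing $\log(1+\sigma_j^2(\cdot))$ over $j$ completes the bound. The step I expect to be the main obstacle is the orthogonality/L\"owner argument, since an additive perturbation of singular values is in general delicate; it succeeds here precisely because the dominant and subdominant contributions to the selected columns lie in the orthogonal ranges of $\M{U}_k$ and $\M{U}_\perp$, turning the perturbation into a clean positive semidefinite inequality.
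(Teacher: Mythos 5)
Your proposal is correct and follows essentially the same route as the paper: both arguments reduce the claim to the singular value bounds $\sigma_j(\M{A})/\norm{\M{V}_{11}^{-1}}_2 \le \sigma_j(\M{C}) \le \sigma_j(\M{A})$ for $1\le j\le k$, obtained by isolating the block $\M{\Sigma}_k\M{V}_{11}$ inside $\M{C}$ and then factoring $\M{\Sigma}_k = (\M{\Sigma}_k\M{V}_{11})\M{V}_{11}^{-1}$. Your orthogonal-ranges/L\"owner/Weyl step is just an explicit rendering of the paper's appeal to the fact that singular values of the submatrix $\M{\Sigma}_k\M{V}_{11}$ of $\bigl[\begin{smallmatrix}\M{\Sigma}_k\M{V}_{11}\\ *\end{smallmatrix}\bigr]$ are dominated by those of the full stacked matrix, so there is no substantive difference.
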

\begin{proof}
    See \Cref{ssec:csspproofs}.
\end{proof}
\revised{As before, $S^{\rm opt}$ denotes the indices corresponding to an optimal set of columns and note that $\phi(\br{m}) = \Psi(\M{A})= \logdet(\M{I} + \M{AA}^\Tra)$.}

We now discuss the interpretation of the bounds. \revised{First, consider the  bounds $\phi(S^{\rm opt}) \le \Psi(\M\Sigma_k) \le \phi(\br{m})$, which says that the performance of ``the best'' set of columns in terms of the D-optimal criterion depends on the top-$k$ singular values. 
More precisely, if the singular values don't decay rapidly or the matrix is not approximately rank-$k$, then we cannot expect  
{$\phi(S^{\rm opt})$ to be close to $\phi([m])$.}
In other words, the upper bound is dictated by the singular value decay.
On the other hand, consider the lower bound 
{$\Psi(\M\Sigma_k/\|\M{V}_{11}^{-1}\|_2) = \logdet(\M{I}+\M\Sigma_k^2/\|\M{V}_{11}^{-1}\|_2^2)  \le \phi(S)$.}}
The lower bound clearly identifies the factor $\|\M{V}_{11}^{-1}\|_2$ and as mentioned earlier, the goal is to identify a permutation matrix $\M\Pi$ such that $\|\M{V}_{11}^{-1}\|_2$ is as close to $1$ as possible; in other words, we want to find a set of well-conditioned columns of $\M{V}_k\t$. We show below in \cref{cor:srrqraksbounds} that \ac{srrqr} can be used to identify such a set of columns.

\subsection{Deterministic CSSP algorithm for OED} \label{ssec:detcssp}

In this section, we discuss implementation issues related to the \ac{oed} computed using the \ac{gks} framework. There are two main computational issues to be addressed: first, the computation of the truncated \ac{svd}, and second, the cost of computing the permutation matrix for which we use pivoted QR. The details of the algorithm are given in~\cref{alg:detcssp}. 

\paragraph{Bounds using pivoted QR} We consider the case that pivoted QR is used to compute the permutation matrix $\M\Pi$. The result below quantifies the D-optimal criterion if \ac{srrqr} is used in \cref{alg:detcssp}. 
\begin{corollary}  
\label{cor:srrqraksbounds}
Suppose we select $k$ columns from $\M{A}$, denoted 
\revised{by the set $S$} 
by applying \ac{srrqr} to $\M{V}_k^\Tra$ with factor $f \ge 1$. Let $q_f(m,k) \equiv \sqrt{1+f^2k(m-k)}$.
Then  
$$ 
\revised{\Psi(\M\Sigma_k/q_f(m,k)) \le  \phi(S) \le \phi(S^{\rm opt}) \le \Psi(\M\Sigma_k) \le \phi(\br{m}).}
$$
\end{corollary}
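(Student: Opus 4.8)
The plan is to derive \Cref{cor:srrqraksbounds} as a specialization of \Cref{thm:aksbounds}. The three rightmost inequalities, $\phi_D(\M{C}) \le \phi_D(\M{C}^{\rm opt}) \le \phi_D(\M\Sigma_k) \le \phi_D(\M{A})$, hold for every admissible permutation and so are inherited verbatim, once we note that applying \ac{srrqr} to $\M{V}_k\t$ does select $k$ linearly independent columns (it produces a nonsingular $\M{R}_{11}$) and hence yields $\rank{\M{V}_{11}} = k$ as required by the hypothesis of \Cref{thm:aksbounds}. Consequently the whole problem reduces to (i) bounding the data-dependent factor in the lower bound of \Cref{thm:aksbounds} by the explicit quantity $q_f(m,k)$, i.e.\ showing $\norm{\M{V}_{11}^{-1}}_2 \le q_f(m,k)$, and (ii) pushing that bound through $\phi_D$.

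For step (i) I would proceed as follows. Applying \ac{srrqr} to the (wide) matrix $\M{V}_k\t \in \R^{k\times m}$ produces the permutation of \cref{eqn:vkpart}, so that $\M{V}_{11} = \M{Q}\M{R}_{11}$ with $\M{Q}\in\R^{k\times k}$ orthogonal and $\M{R}_{11}\in\R^{k\times k}$ upper triangular. Since right multiplication by an orthogonal matrix leaves the spectral norm invariant, $\norm{\M{V}_{11}^{-1}}_2 = \norm{\M{R}_{11}^{-1}}_2 = 1/\sigma_k(\M{R}_{11})$. Now invoke the singular-value guarantee \cref{eqn:svals_bounds} with $i=k$ and $p_1(k,m)=q_f(m,k)$ from \cref{eqn:qbounds}, applied to $\M{M}=\M{V}_k\t$: this gives $\sigma_k(\M{R}_{11}) \ge \sigma_k(\M{V}_k\t)/q_f(m,k)$. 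The crucial structural input, special to the \ac{gks} setting, is that $\M{V}_k$ has orthonormal columns, so every singular value of $\M{V}_k\t$ equals $1$ and in particular $\sigma_k(\M{V}_k\t)=1$. Combining, $\norm{\M{V}_{11}^{-1}}_2 = 1/\sigma_k(\M{R}_{11}) \le q_f(m,k)$.

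For step (ii), note that the scalar map $s \mapsto \phi_D(\M\Sigma_k/s) = \logdet(\M{I}_k + s^{-2}\M\Sigma_k^2)$ is monotonically decreasing in $s>0$. Since $q_f(m,k) \ge \norm{\M{V}_{11}^{-1}}_2$, this yields $\phi_D(\M\Sigma_k/q_f(m,k)) \le \phi_D(\M\Sigma_k/\norm{\M{V}_{11}^{-1}}_2)$. Chaining this with the lower bound of \Cref{thm:aksbounds} produces the leftmost inequality of the corollary, while the remaining inequalities are inherited unchanged, completing the proof.

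I expect the main obstacle to be bookkeeping the \emph{orientation} of the \ac{srrqr} bounds: \cref{eqn:svals_bounds,eqn:inv_bounds,eqn:qbounds} are written for a tall matrix ($m \ge n$ in~\cref{eqn:qr}), whereas here they are invoked on the short-and-wide $\M{V}_k\t$, so one must correctly identify the analogue with the column count playing the role of $n$ (hence $p_1(k,m)$) and confirm that the $k\times k$ leading block $\M{R}_{11}$ is the object being controlled. The genuine leverage comes from coupling the \ac{srrqr} guarantee with the orthonormality of $\M{V}_k$; without the fact $\sigma_k(\M{V}_k\t)=1$ one would not obtain the clean constant. As a cross-check one can rederive the same bound from the entrywise inverse bound~\cref{eqn:inv_bounds}: writing $\M{G} = \M{V}_{11}^{-1}\M{V}_{12}$ with $|\M{G}_{ij}|\le f$ and using the row-orthonormality relation $\M{V}_{11}(\M{I}_k + \M{G}\M{G}\t)\M{V}_{11}\t = \M{I}_k$ gives $\norm{\M{V}_{11}^{-1}}_2^2 = 1 + \norm{\M{G}}_2^2 \le 1 + \norm{\M{G}}_F^2 \le 1 + f^2 k(m-k) = q_f(m,k)^2$, recovering the identical constant.
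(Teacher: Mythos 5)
Your proof is correct and follows essentially the same route as the paper: the paper simply cites Lemma~2.1 of Drma\v{c}--Gugercin applied to $\M{V}_k\t$, which is exactly the bound $\norm{\M{V}_{11}^{-1}}_2 \le q_f(m,k)$ that you derive (both via the \ac{srrqr} singular-value guarantee with $\sigma_k(\M{V}_k\t)=1$ and via the entrywise bound on $\M{R}_{11}^{-1}\M{R}_{12}$), before plugging into \Cref{thm:aksbounds}. Your write-up supplies the details that the paper outsources to that citation, and both of your derivations of the constant are sound.
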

\begin{proof}
    Follows from~\cite[Lemma 2.1]{drmac2018discrete} applied to $\M{V}_k\t$ and \revised{\cref{thm:aksbounds}}.
\end{proof}

In particular, this result shows that the algorithm can find a set of columns that satisfy
\revised{\begin{equation}\label{eqn:existence} \Psi(\M\Sigma_k/\sqrt{1+k(m-k)}) \le  \phi(S)  \le \Psi(\M\Sigma_k) \le \phi(\br{m}),\end{equation}}
but such a set of columns may take exponential time to compute. 

Instead of \ac{srrqr}, we use \ac{qrcp} in practice to compute the permutation matrix $\M\Pi$. While the worst-case behavior for \ac{qrcp} involves the factor $q(m,k) \le \sqrt{m-k}\,2^k$, the algorithm behaves remarkably well in practice. Furthermore, efficient implementations are available from LAPACK \cite{quintana1998blas}.

\paragraph{Computational cost} We perform a truncated \ac{svd} of the preconditioned forward operator to obtain the first $k$ right singular vectors $\M{V}_k$. This will require $\sim k$ applications of the forward and adjoint model each if we employ a matrix-free method for the \ac{svd}  such as a Krylov subspace method~\cite{golub2012matrix} or Randomized \ac{svd}~\cite{halko2011finding}. 
In numerical experiments, we use Randomized \ac{svd} to accelerate the computational cost with a Gaussian random matrix~\cite[Algorithm 5.1]{halko2011finding}. This results in a cost of
\[T_{\text{SVD}} \equiv 2\ell T_{\M{A}} + O(k^2 (m+n) + k^3) \> \text{flops}. \]
Here $T_{\M{A}}$ is the cost of a \ac{matvecs} with $\M{A}$ or $\M{A}\t$ and $\ell = k + p$ where $ p \le 20$ is the oversampling amount.  For additional accuracy, we may also use Randomized SVD with subspace iterations, for some additional computational cost~\cite[Algorithm 5.2]{halko2011finding}. 
The running time for \cref{alg:detcssp} with \ac{srrqr} is $T_{\text{SVD}} + O(k^2m\log_f m)$ flops  (or $T_{\text{SVD}} + O(k^2m)$ flops for \ac{qrcp}). The \ac{matvecs} with $\M{A}$ and $\M{A}\t$ can be parallelized, leading to additional computational benefits.

\begin{algorithm}[!ht]
\caption{OED via Deterministic CSSP}
\begin{algorithmic}[1]
    \Procedure{GKS}{}\newline
    \textbf{Input:} $\M{F} \in \Rn{m \times n}$, prior covariance matrix $\M\Gamma_{\rm pr}$, noise variance $\eta^2$, and number of columns $k$.\newline
    \revised{\textbf{Output:} Set $S$ such that $\M{C} = \M{A}(:, S) \in \Rn{n \times k}$.}
    \State Define operator $\M{A} = \eta^{-1} \M{\Gamma}_{\rm pr}^{1/2}\M{F}\t$.
    \State Compute the truncated SVD: $[\sim, \sim, \M{V}_k] = \mathsf{svd}\pr{\M{A}, k}$.
    \State Run pivoted QR: $[\sim, \sim, \boldsymbol{\Pi}] = \mathsf{qr}\pr{\M{V}_k^\Tra}$. 
    \revised{\State Form $S$ with the first $k$ columns of $\boldsymbol{\Pi}$: $\boldsymbol{\Pi}_1 = \boldsymbol{\Pi}(:, 1:k)$.}
    \EndProcedure
\end{algorithmic}
\label{alg:detcssp}
\end{algorithm}

\section{RAF-OED approach}\label{sec:sensor}
In this section, we develop an algorithm for \ac{oed} called \ac{raf} that is based on the approach described in \Cref{sec:cssp}.

In this approach, we draw a random matrix $\M\Omega \in \R^{d \times n}$ which is a random embedding. For instance, we can draw $\M\Omega$ as a Gaussian random matrix with independent entries drawn from $\mc{N}(0,1/d)$. Next we form the sketch $\M{Y} =\M{\Omega A} \in \R^{d\times m}$, which has the same number of columns as $\M{A}$. We then perform subset selection on $\M{Y}$ by performing pivoted QR directly on $\M{Y}$. This is described in \Cref{alg:randoed}. Justification for this approach of subset selection is given in~\cite{voronin2017efficient,dong2023simpler,duersch2017randomized}. Other choices for the random matrix $\M\Omega$ are possible and discussed in~\cite[Sections 8-9]{martinsson2020randomized}.

\begin{algorithm}[!h]
\caption{OED via RAF-OED}
    \begin{algorithmic}[1]
    \Procedure{RAF-OED}{}\newline
    \textbf{Input:} $\M{F} \in \Rn{m \times n}$, prior covariance matrix $\M\Gamma_{\rm pr}$, noise variance $\eta^2$, number of columns $k$,  and oversampling parameter $p$.\newline
    \revised{\textbf{Output:} Set $S$ such that $\M{C} = \M{A}(:, S) \in \Rn{n \times k}$.}
    \State Draw random matrix $\M{\Omega} \in \R^{d\times n}$, where $d = k + p$, with independent entries drawn from the distribution $\mc{N}(0,1/d)$.
    \State Define operator $\M{A} = \M{\Gamma}_{\rm pr}^{1/2}\M{F}\t \eta^{-1}$ and compute $\M{Y} = \M\Omega\M{A}$.
    \State Run pivoted QR: $[\sim, \sim, \boldsymbol{\Pi}] = \mathsf{qr}\pr{\M{Y}}$.     \revised{\State Form $S$ with the first $k$ columns of $\boldsymbol{\Pi}$: $\boldsymbol{\Pi}_1 = \boldsymbol{\Pi}(:, 1:k)$.}
    \EndProcedure
    \end{algorithmic}
    \label{alg:randoed}
\end{algorithm}

A result similar to~\cref{thm:aksbounds} and \cref{cor:srrqraksbounds} can be stated for \cref{alg:randoed}. 
\begin{theorem}[RAF-OED]\label{thm:randoed} Let $d = k+p$ with $p \ge 2$ and let $\M\Pi_1$ be the output of \cref{alg:randoed} computed using \ac{srrqr} with $f \ge 1$. With probability at least $1-\delta$
\revised{\[ \Psi(\M\Sigma_k/(q_f^R(m,k)C_g)) \le \phi(S) \le  \phi(S^{\rm opt}) \le \Psi(\M\Sigma_k) \le \phi(\br{m}),\]}
where  $C_g \equiv \frac{e\sqrt{d}}{p+1}\left(\frac{2}{\delta}\right)^{1/(p+1)}(\sqrt{n}+\sqrt{d} + \sqrt{2\log(2/\delta)})$.
\end{theorem}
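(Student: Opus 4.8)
The plan is to split the four-term chain into its deterministic upper part and its probabilistic lower part. The three upper inequalities $\phi_D(\M{C}) \le \phi_D(\M{C}_{\rm opt}) \le \phi_D(\M\Sigma_k) \le \phi_D(\M{A})$ are deterministic and hold for \emph{any} selection of $k$ columns of $\M{A}$; they are precisely the upper bounds already established in \Cref{thm:aksbounds}, so I would simply invoke that result. All the genuine work is in the high-probability lower bound $\phi_D(\M\Sigma_k/(q_f(m,k)C_g)) \le \phi_D(\M{C})$.

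First I would reduce the D-optimality bound to per-index singular-value estimates. Since $\phi_D(\M{C}) = \sum_{i=1}^k \log(1+\sigma_i(\M{C})^2)$ and $\phi_D(\M\Sigma_k/(q_fC_g)) = \sum_{i=1}^k \log(1+\sigma_i(\M{A})^2/(q_fC_g)^2)$, and because $x \mapsto \log(1+x^2)$ is increasing, it suffices to show $\sigma_i(\M{C}) \ge \sigma_i(\M{A})/(q_f(m,k)\,C_g)$ for every $1\le i \le k$ on a single event of probability at least $1-\delta$. I would then assemble a deterministic chain of singular-value inequalities. Writing $\M{Y}_1 = \M\Omega\M{C}$ for the selected columns of the sketch and using $\sigma_i(\M\Omega\M{C}) \le \sigma_1(\M\Omega)\sigma_i(\M{C})$ gives $\sigma_i(\M{C}) \ge \sigma_i(\M{Y}_1)/\sigma_1(\M\Omega)$. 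Since pivoted QR yields $\M{Y}_1 = \M{Q}_1\M{R}_{11}$ with $\M{Q}_1$ orthonormal, $\sigma_i(\M{Y}_1)=\sigma_i(\M{R}_{11}) \ge \sigma_i(\M{Y})/q_f(m,k)$ by the sRRQR singular-value bound \eqref{eqn:svals_bounds}. Finally, using the SVD $\M{A} = \M{U}\M\Sigma\M{V}\t$ and the rotational invariance of the Gaussian, I write $\M{Y} = \M{G}\M\Sigma\M{V}\t$ with $\M{G} = \M\Omega\M{U}$ Gaussian; because $\M{V}\t$ has orthonormal rows, $\sigma_i(\M{Y}) = \sigma_i(\M{G}\M\Sigma)$, and then $\sigma_i(\M{G}\M\Sigma) \ge \sigma_i(\M{G}_k\M\Sigma_k) \ge \sigma_k(\M{G}_k)\,\sigma_i(\M\Sigma_k) = \sigma_k(\M{G}_k)\,\sigma_i(\M{A})$, where $\M{G}_k = \M\Omega\M{U}_k$ (the first inequality drops trailing columns, the second uses $\M\Sigma_k = \M{G}_k^\dagger\M{G}_k\M\Sigma_k$). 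Chaining these produces the deterministic estimate $\sigma_i(\M{C}) \ge \frac{\sigma_i(\M{A})}{q_f(m,k)}\cdot\frac{\sigma_k(\M{G}_k)}{\sigma_1(\M\Omega)}$.

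It then remains to control $\sigma_1(\M\Omega)/\sigma_k(\M{G}_k) \le C_g$. Since $\sqrt{d}\,\M\Omega$ and $\sqrt{d}\,\M{G}_k$ are standard Gaussian matrices of sizes $d\times n$ and $d\times k$, the scaling cancels, and it suffices to bound $\sigma_1(\sqrt{d}\,\M\Omega)\cdot\|(\sqrt{d}\,\M{G}_k)^\dagger\|_2$. I would bound $\sigma_1$ of the $d\times n$ Gaussian by $\sqrt{n}+\sqrt{d}+\sqrt{2\log(2/\delta)}$ (failure probability $\delta/2$ via the standard largest-singular-value deviation inequality with $t=\sqrt{2\log(2/\delta)}$), and $\|(\sqrt{d}\,\M{G}_k)^\dagger\|_2 = 1/\sigma_k(\sqrt{d}\,\M{G}_k)$ by $\frac{e\sqrt{d}}{p+1}(2/\delta)^{1/(p+1)}$ (failure probability $\delta/2$ via the Gaussian pseudoinverse tail bound \cite[Proposition~10.4]{halko2011finding}, which requires $p\ge2$ and is applied with $t=(2/\delta)^{1/(p+1)}$). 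A union bound gives both simultaneously with probability at least $1-\delta$, and their product is exactly $C_g$, completing the lower bound.

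The main obstacle is this final probabilistic step. The crucial observations are that $\M{G}_k = \M\Omega\M{U}_k$ is itself standard Gaussian (after rescaling) by rotational invariance, so that the pseudoinverse tail bound applies to it even though $\M{G}_k$ is defined through $\M\Omega$ and $\M{U}_k$; and that $\M\Omega$ (controlling $\sigma_1$) and $\M{G}_k$ (controlling $\sigma_k$) share randomness, so one cannot assume independence and must instead use a union bound, splitting the failure budget as $\delta/2 + \delta/2$ — which is precisely what yields the two occurrences of $2/\delta$ in the constant $C_g$. The singular-value chain itself is routine once the submultiplicative inequalities and the sRRQR guarantee are set up in the correct order.
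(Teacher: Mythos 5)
Your proposal is correct and follows essentially the same route as the paper: the upper bounds are inherited from \cref{thm:aksbounds}, and the lower bound comes from the same chain $\sigma_i(\M{C}) \ge \sigma_i(\M{Y}\M\Pi_1)/\|\M\Omega\|_2 \ge \sigma_i(\M{Y})/(q_f\|\M\Omega\|_2) \ge \sigma_i(\M{A})/(q_f\|\M\Omega\|_2\|(\M\Omega\M{U}_k)^\dagger\|_2)$, with the factor $\|\M\Omega\|_2\|(\M\Omega\M{U}_k)^\dagger\|_2 \le C_g$ controlled exactly as in the paper's \Cref{ssec:proofdetails} via rotational invariance, the Gaussian norm and pseudoinverse tail bounds of Halko et al., and a $\delta/2+\delta/2$ union bound. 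The only cosmetic difference is that you track the $\sqrt{d}$ rescaling by working with $\sqrt{d}\,\M\Omega$ and $\sqrt{d}\,\M{G}_k$ directly, whereas the paper keeps $\M\Omega$ and absorbs the scaling into the pseudoinverse bound; both yield the stated $C_g$.
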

\begin{proof}
    See \Cref{ssec:sensorproofs}.
\end{proof}

\paragraph{Computational cost} \Cref{alg:randoed} requires $dT_{\M{A}}$ matvecs with $\M{A}$ and an additional $\mc{O}(dm\min\{d,m\})$ {flops}.  The value of $d$ can be chosen either as $d= k +p$, where $p$ is a small oversampling parameter. For more robust numerics,  the choice of $d=2k+1$ might be appropriate. A remarkable aspect of this approach is that it only requires matvecs with $\M{A}\t$; that is, a matvec each with $\M{F}$ and $\M\Gamma_{\rm pr}^{1/2}$. Importantly, this completely avoids computing matvecs with $\M{F}\t$. This property is valuable in applications in which adjoint computations with $\M{F}$ are either very expensive or not possible (for example, because of the use of legacy codes). Furthermore, the computations of matvecs with $\M{A}$ can be done in parallel, increasing the computational attractiveness of this method. Computing the \ac{map} estimate in~\eqref{eqn:map} may still require computing adjoints with $\M{F}$, but this can be avoided using alternative techniques such as~\cite{panteleev2015adjoint}.

\section{Randomized subset selection}
\label{sec:rand}
Recall that the \ac{srrqr} algorithm takes $O(k^2m\log_f m)$ flops to select $k$ columns from $m$, while \ac{qrcp} costs $O(k^2m)$ flops. 
To further reduce this cost we use randomized sampling techniques.
This approach involves sampling $s \ge k$ columns using a discrete probability distribution defined based on $\M{V}_k$.
The computational cost of sampling is $O(km)$ which is attractive.
The trade-off, however, is that we need to select $s \sim k \log k$ samples. Since we need to select exactly $k$ columns ($k$ sensors), we propose a hybrid approach (\Cref{ssec:sampling}) that uses both randomization and pivoted  QR factorizations to select columns efficiently. We discuss the analysis in \Cref{ssec:sampanalysis}, and a discussion of computational costs in \Cref{ssec:sampcosts}.

\subsection{Sampling algorithms}\label{ssec:sampling}

The \emph{leverage score} sampling is a well-known approach for column subset selection (see e.g.,~\cite{mahoney2009cur,avron2013faster}).
We first compute the right singular vectors $\M{V}_k$ and define the (subspace) leverage scores as $\tau_j = \|\V{e}_j\t\M{V}_k\|_2^2$ for $1\le j \le m$ which are the squared row norms of $\M{V}_k$. Note that since $\sum_{j=1}^m \tau_j = \|\M{V}_k\|_F^2 = k$, the leverage scores can be used to define a probability distribution on $\{1,\dots,m\}$ with the probability of sampling index $j$ as $\tau_j/k$. 

We  define the sampling probabilities 
\begin{equation}\label{eqn:levscores} \pi_j =  \frac{\tau_j}{2k} + \frac{1}{2m} \qquad 1 \le j \le m,  \end{equation}
which is an average of the leverage score-based distribution and the uniform distribution. The idea is to handle cases where $\tau_j$ is almost zero and to ensure that $\pi_j$ have nontrivial lower bounds~\cite{saibaba2020randomized}.
Note that other distributions, such as $\pi_j = p_j/(\sum_{j=1}^m p_j)$, where $p_j = \max\pr{\tau_j, k/m}$, for $1 \le j \le m$ can also be used instead which achieve similar bounds~\cite[Section 3.4]{avron2013faster}. Another option is to use a mixing parameter $\beta \in (0,1] $ and take the convex combination of the leverage score and the uniform distributions $\pi_j^\beta =  \beta\frac{\tau_j}{k} + (1-\beta)\frac{1}{m}$ for $1\le j \le m$. 

The hybrid approach as described in \cref{alg:leverage_sampling}, works in two stages to return exactly $k$ columns of $\M{A}$. In the first stage, it generates $s = O\pr{k\log k}$ columns of $\M{A}$ chosen based on the probabilities $\{\pi_j\}_{j=1}^m$ chosen independently and with replacement.  The method returns an appropriately weighted set of columns  \begin{equation}\label{eqn:alev}\Mh{C}_{\rm lev} \equiv \M{ASD} \in \R^{n\times s}.\end{equation}
Because we are sampling with replacement, repetition of the columns is allowed.  The reason for weighting the columns is to ensure that $\expect{\left[(\M{ASD})(\M{ASD})\t \right]} = \M{AA}\t$ where  the expectation is over the sampled indices~\cite[Lemma 3]{drineas2006fast}. Similar ideas to the hybrid approach have been proposed earlier in~\cite{boutsidis2009improved,broadbent2010subset}. 

To further pare $\Mh{C}_{\rm lev}$ down to $k$ columns, we perform \ac{srrqr} on $\M{V}_k\t\M{SD}$ to obtain the selection operator $\M\Pi_1 \in \R^{s\times k}$. The corresponding weighted columns of $\M{A}$ are \begin{equation} 
\label{eqn:ahyb} \Mh{C}_{\rm hyb} \equiv \Mh{C}_{\rm lev}\M\Pi_1 = \M{ASD\Pi}_1 \in \R^{n\times k}.\end{equation} 
The final selected indices correspond to the columns of $\M{S\Pi}_1 \in \R^{m\times k}$ and the corresponding selected columns of 
\revised{$\M{A}$ are $\M{C}_{\rm hyb} \equiv \M{AS\Pi}_1 = \M{A}(:, S_{\rm hyb})$}.

A precise analysis of the minimal number of samples is given shortly. 

\begin{algorithm}[ht]
\caption{OED via randomized sampling}
\begin{algorithmic}[1] 
    \Procedure{RandomSampling}{}\newline
    \textbf{Input:} $\M{F} \in \Rn{m \times n}$, prior covariance matrix $\M\Gamma_{\rm pr}$, noise variance $\eta^2$, number of columns $k$, number of samples $s \ge k$.  \newline
    \revised{\textbf{Output:} Set $S$ such that $\M{C} = \M{A}(:, S) \in \Rn{n \times k}$.}
     \State Define operator $\M{A} = \eta^{-1} \M{\Gamma}_{\rm pr}^{1/2}\M{F}\t$.
    \State Compute the truncated \ac{svd}: $[\sim, \sim, \M{V}_k] = \mathsf{svd}(\M{A}, k)$.
    \State Compute the leverage scores in~\eqref{eqn:levscores}.
 
    \State Stage 1: Generate $s$ indices $\{i_1,\dots,i_s\}$ from $\{1,\dots,m\}$ with probabilities $\{\pi_j\}_{j=1}^m$ independently and with replacement.
    \State Define the selection operator $\M{S} = \bmat{\V{e}_{i_1} & \dots & \V{e}_{i_s}}$ and weights $$\M{D} = \diag{\frac{1}{\sqrt{s\pi_{i_1}}}, \dots,\frac{1}{\sqrt{s\pi_{i_s}}}}.$$
    \State Stage 2: Compute \ac{srrqr} with parameter $f\ge 1$ on $\M{V}_k\t\M{SD}$ to obtain $\M\Pi_1 \in \R^{s \times k}$
    \revised{\State Form $S$ with the first $k$ columns of $\boldsymbol{\Pi}$: $\M{\Pi}_1 = \M{\Pi}(:, 1:k)$.}
    \EndProcedure
\end{algorithmic}
\label{alg:leverage_sampling}
\end{algorithm}

\subsection{Analysis of the sampling}\label{ssec:sampanalysis}
We now derive a result on the D-optimal criterion if \cref{alg:leverage_sampling} is used to select the sensors. The result gives insight into the minimal number of samples $s$. 

\begin{theorem}[Random sampling]
\revised{\label{cor:random_unweighted} Let $\epsilon,\delta \in (0,1)$ be user-specified parameters and suppose  $s \ge 4k \epsilon^{-2}\log(k/\delta)$.   Define $\M{C}_{\rm lev} = \M{AS} = \M{A}(:, S_{\rm lev})$ and $\M{C}_{\rm hyb} = \M{AS\Pi}_1 = \M{A}(:, S_{\rm hyb})$ as the unweighted columns generated using \cref{alg:leverage_sampling}. Then with probability at least $1-\delta$, for the 
\[ \Psi(\M\Sigma_k/  (q_f^U(m,s,k) ) ) \le \phi(S_{\rm hyb}) \le \phi(S_{\rm lev}) ,\]
where $q_f^U(m,s,k) \equiv q_f(s,k)\sqrt{(2m/s(1-\epsilon))}$ .}
\end{theorem}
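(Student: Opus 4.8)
The plan is to establish the right-hand inequality deterministically and the left-hand inequality with probability at least $1-\delta$, confining all randomness to a single matrix-concentration estimate. For the upper bound, note that $\M{C}_{\rm hyb} = \M{AS}\M\Pi_1$ retains only $k$ of the $s$ columns of $\M{C}_{\rm lev} = \M{AS}$, i.e.\ $\M{C}_{\rm hyb} = \M{C}_{\rm lev}\M\Pi_1$. Discarding the rank-one contributions of the unselected columns gives $\M{C}_{\rm hyb}\M{C}_{\rm hyb}\t \preceq \M{C}_{\rm lev}\M{C}_{\rm lev}\t$, whence $\M{I} + \M{C}_{\rm hyb}\M{C}_{\rm hyb}\t \preceq \M{I} + \M{C}_{\rm lev}\M{C}_{\rm lev}\t$, and monotonicity of $\logdet$ under the L\"owner ordering yields $\phi_D(\M{C}_{\rm hyb}) \le \phi_D(\M{C}_{\rm lev})$ with no probabilistic input.

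For the lower bound, the key observation is that $\M{S}\M\Pi_1$ is just a selection of $k$ columns of $\M{A}$, so $\M{C}_{\rm hyb}$ is covered by the GKS structural bound of \cref{thm:aksbounds} with selected submatrix $\M{V}_{11} \equiv \M{V}_k\t\M{S}\M\Pi_1$ of $\M{V}_k\t$. Applying that theorem gives $\phi_D(\M\Sigma_k/\|\M{V}_{11}^{-1}\|_2) \le \phi_D(\M{C}_{\rm hyb})$; since $\phi_D(\M\Sigma_k/x)$ is decreasing in $x$, it suffices to prove $\|\M{V}_{11}^{-1}\|_2 \le q_f^U(m,s,k)$ with probability at least $1-\delta$.

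To control $\|\M{V}_{11}^{-1}\|_2$ I would pass through the weighted matrix $\M{W} \equiv \M{V}_k\t\M{SD}$ on which \ac{srrqr} actually operates. Writing $\M{D}_1 \equiv \M\Pi_1\t\M{D}\M\Pi_1$ for the $k\times k$ diagonal of selected weights, the identity $\M{D}^{-1}\M\Pi_1 = \M\Pi_1\M{D}_1^{-1}$ gives $\M{V}_{11} = (\M{W}\M\Pi_1)\M{D}_1^{-1}$, hence $\|\M{V}_{11}^{-1}\|_2 \le \|\M{D}_1\|_2\,\|(\M{W}\M\Pi_1)^{-1}\|_2$. The two factors are bounded independently: because $\pi_j \ge 1/(2m)$ by~\eqref{eqn:levscores}, every diagonal entry of $\M{D}$ (and hence of $\M{D}_1$) is at most $\sqrt{2m/s}$, so $\|\M{D}_1\|_2 \le \sqrt{2m/s}$; and the \ac{srrqr} guarantee applied to $\M{W}$ (as in \cref{cor:srrqraksbounds}, via~\cite[Lemma 2.1]{drmac2018discrete}) gives $\sigma_{\min}(\M{W}\M\Pi_1) \ge \sigma_k(\M{W})/q_f(s,k)$, i.e.\ $\|(\M{W}\M\Pi_1)^{-1}\|_2 \le q_f(s,k)/\sigma_k(\M{W})$. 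The only probabilistic step is the lower bound on $\sigma_k(\M{W}) = \sigma_{\min}(\M{V}_k\t\M{SD})$: writing $\M{V}_k\t\M{SD}\M{D}\M{S}\t\M{V}_k = \sum_{t=1}^s (s\pi_{i_t})^{-1}\V{v}_{i_t}\V{v}_{i_t}\t$ with $\V{v}_j\t$ the $j$-th row of $\M{V}_k$, each summand has norm at most $\tau_j/(s\pi_j) \le 2k/s$ (using $\pi_j \ge \tau_j/(2k)$) and mean $\frac1s\M{I}_k$, so a matrix Chernoff bound yields $\|\M{V}_k\t\M{SD}\M{D}\M{S}\t\M{V}_k - \M{I}_k\|_2 \le \epsilon$ with probability at least $1-\delta$ once $s \ge 4k\epsilon^{-2}\log(k/\delta)$, giving $\sigma_k(\M{W}) \ge \sqrt{1-\epsilon}$. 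Combining the three estimates produces $\|\M{V}_{11}^{-1}\|_2 \le q_f(s,k)\sqrt{2m/(s(1-\epsilon))} = q_f^U(m,s,k)$, which closes the lower bound.

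The main obstacle is the bookkeeping that ties together three different objects: the weighted matrix $\M{W}$ on which concentration and \ac{srrqr} are stated, the unweighted submatrix $\M{V}_{11}$ that enters \cref{thm:aksbounds}, and the floor $\pi_j \ge 1/(2m)$ that converts an otherwise uncontrolled weight ratio into the clean factor $\sqrt{2m/s}$; the commutation identity $\M{D}^{-1}\M\Pi_1 = \M\Pi_1\M{D}_1^{-1}$ is what makes this transparent. A secondary point worth checking is invertibility of $\M{V}_{11}$: sampling with replacement may repeat an index, but the concentration bound forces $\sigma_k(\M{W}) > 0$, so \ac{srrqr} cannot select two parallel columns arising from a duplicated index, guaranteeing $k$ distinct sensors and $\rank{\M{V}_{11}} = k$ as required by \cref{thm:aksbounds}. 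The matrix Chernoff estimate is standard, but it is the step that pins down the explicit constant in the sample complexity.
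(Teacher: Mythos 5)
Your proof is correct and follows essentially the same route as the paper's: the L\"owner-ordering argument for the upper bound, \cref{thm:aksbounds} applied to the selection operator $\M{S}\M\Pi_1$, the commutation $\M{D}\M\Pi_1 = \M\Pi_1\M{D}_1$ to split $\|\M{V}_{11}^{-1}\|_2 \le \|\M{D}_1\|_2\,\|(\M{V}_k\t\M{SD}\M\Pi_1)^{-1}\|_2$, the \ac{srrqr} guarantee, and the bound $\sigma_k(\M{V}_k\t\M{SD}) \ge \sqrt{1-\epsilon}$ (which the paper simply cites from~\cite[Theorem 6.2]{holodnak2015randomized} rather than rederiving via matrix Chernoff). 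Your added remark that the concentration event forces \ac{srrqr} to select $k$ distinct indices, so that $\M{V}_{11}$ is genuinely invertible, is a point the paper's proof leaves implicit.
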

\begin{proof}
See \Cref{ssec:random_unwproofs}
\end{proof}
Since $\M{C}_{\rm lev}$ chooses $s \ge k$ columns, \revised{the upper bound of $\Psi(\M\Sigma_k)$ no longer holds for $\phi(S_{\rm lev})$, but it is true that $\phi(S_{\rm hyb}) \le \Psi(\M\Sigma_k)$.}

\subsection{Summary of computational costs}\label{ssec:sampcosts} Let us look at the savings in flops achieved via randomized sampling instead of running a QR factorization. Computing the sampling probabilities $\{\pi_j\}_{j=1}^m$ requires one pass over $\M{V}_k$, and costs $O(mk)$ flops. Sampling and forming the selection matrix requires $O(s)$ flops. The cost of the second stage requires an additional pivoted QR factorization performed on an $k \times s$ matrix.
This takes $O(k^2s\log_f s)$ flops (\ac{srrqr}) or $O(k^2s)$ flops (\ac{qrcp}) depending on the algorithm used.

\section{Data completion}
\label{sec:datacomp}
Suppose we are given a limited set of measurements $\M{S}\t \V{d} \in \mathbb{R}^{k}$, that collects data at sensors as determined by the selection operator $\M{S}$. We want to ``complete'' the measurements to provide the data at all the candidate measurement locations. The completed data can then be used to solve an inverse problem by computing the \ac{map} estimate, or using another inversion technique.

To this end, suppose we are given the right singular vectors $\M{V}_k$ of the preconditioned operator $\M{A}$. We further assume that $\M{S}^\Tra \M{V}_k$ is invertible, and we can define the oblique projector $\MB{P} \equiv \M{V}_k(\M{S}^\Tra \M{V}_k)^{-1}\M{S}^\Tra$ (notice $\MB{P}^2 = \MB{P}$, so it is idempotent).
We propose the following approximation to $\V{d}$  through the formula 
\begin{equation}\label{eqn:datacomplete}
    \MB{P}\V{d} = \M{V}_k(\M{S}^\Tra \M{V}_k)^{-1} (\M{S}^\Tra\V{d}).
\end{equation} 
Notice that in the expression above we only select data at $\M{S}\t\V{d}$.  This approach uses a judicious combination of the forward operator, prior and noise covariance matrices to generate the basis $\M{V}_k$. The projector $\MB{P}$ is an interpolatory projector~\cite[Definition 3.1]{sorensen2016deim} and has the interpolatory property $\M{S}\t\MB{P}\V{d} = \M{S}\t\V{d}$. This implies that $\MB{P}\V{d}$ matches $\V{d}$ {\em exactly} (in exact arithmetic) at the indices determined by $\M{S}$. A similar approach has been used in model reduction and is called the \acf{deim} approach \cite{chaturantabut2010nonlinear}. We call our approach \acf{bdeim}, to recognize the fact that the basis arises from the forward operator preconditioned by the square roots of the noise and the prior covariance matrices. Note that this approach gives an inexpensive way to complete the data (assuming the singular vectors $\M{V}
_k$ are available) without having to solve the inverse problem explicitly.

We now derive a result on the squared absolute error in the completed data. 
\begin{theorem}[\ac{bdeim} error]\label{thm:complete} Let $\M\Gamma_{\rm noise} = \eta^2\M{I}$ and let  $1 \le k < m$. Let $\MB{P}\V{d}$ be the completed data as in~\eqref{eqn:datacomplete}. The absolute error in the completed data satisfies
\[ \expect_{\V{d}}\left[\|(\M{I}-\MB{P})\V{d}\|_{\M\Gamma_{\rm noise}^{-1}  }\right] \le  \|(\M{S}^\Tra \M{V}_k)^{-1}\|_2 \left[ \|\M\Sigma_\perp\|_F +    \|\M\Sigma_\perp\|_2 \|\V\mu_{\rm pr}\|_{\M\Gamma_{\rm pr}^{-1}} +  \sqrt{m-k}\right].\] 
The expectation is taken over the data, with density $\pi(\V{d}).$ 
\end{theorem}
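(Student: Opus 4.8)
The plan is to reduce the weighted-norm error to a Euclidean residual against the dominant right singular subspace, and then exploit the Gaussian structure of the data. Since $\M\Gamma_{\rm noise} = \eta^2\M{I}$, we have $\|(\M{I}-\MB{P})\V{d}\|_{\M\Gamma_{\rm noise}^{-1}} = \eta^{-1}\|(\M{I}-\MB{P})\V{d}\|_2 = \|(\M{I}-\MB{P})(\eta^{-1}\V{d})\|_2$. Because $\MB{P}$ is the interpolatory (oblique) projector onto the column space of $\M{V}_k$ and $\M{S}\t\M{V}_k$ is assumed invertible, the standard DEIM error bound~\cite[Lemma 3.2]{chaturantabut2010nonlinear} (see also~\cite{sorensen2016deim}) gives, for every vector $\V{x}$,
\[ \|(\M{I}-\MB{P})\V{x}\|_2 \le \|(\M{S}\t\M{V}_k)^{-1}\|_2\,\|(\M{I}-\M{V}_k\M{V}_k\t)\V{x}\|_2, \]
so $\|(\M{S}\t\M{V}_k)^{-1}\|_2$ factors out and it remains to bound $\expect_{\V{d}}[\,\|(\M{I}-\M{V}_k\M{V}_k\t)(\eta^{-1}\V{d})\|_2\,]$, the expected orthogonal-projection residual of the normalized data onto the complement of the dominant right singular subspace.

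Next I would substitute the data model. Writing $\V{m} = \V\mu_{\rm pr} + \M\Gamma_{\rm pr}^{1/2}\V{z}$ with $\V{z}\sim\mc{N}(0,\M{I}_n)$ and $\V\epsilon = \eta\V{w}$ with $\V{w}\sim\mc{N}(0,\M{I}_m)$ independent, and using $\eta^{-1}\M{F}\M\Gamma_{\rm pr}^{1/2} = \M{A}\t$, one obtains
\[ \eta^{-1}\V{d} = \M{A}\t\M\Gamma_{\rm pr}^{-1/2}\V\mu_{\rm pr} + \M{A}\t\V{z} + \V{w}. \]
Since $\M{A}\t = \M{V}_k\M\Sigma_k\M{U}_k\t + \M{V}_\perp\M\Sigma_\perp\M{U}_\perp\t$ and $(\M{I}-\M{V}_k\M{V}_k\t)\M{V}_k = 0$ while $(\M{I}-\M{V}_k\M{V}_k\t)\M{V}_\perp = \M{V}_\perp$, applying $\M{I}-\M{V}_k\M{V}_k\t$ annihilates the dominant block and leaves only the subdominant contributions $\M{V}_\perp\M\Sigma_\perp\M{U}_\perp\t(\cdot)$. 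The triangle inequality then splits the expected residual into a deterministic \emph{mean} term, a \emph{prior} term in $\V{z}$, and a \emph{noise} term in $\V{w}$; this split is legitimate because the triangle inequality is applied before taking the expectation, after which linearity lets each expectation act on a single randomness source.

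Finally I would bound the three terms. The mean term is controlled deterministically by $\|\M{V}_\perp\|_2\|\M\Sigma_\perp\|_2\|\M{U}_\perp\t\|_2\|\M\Gamma_{\rm pr}^{-1/2}\V\mu_{\rm pr}\|_2 = \|\M\Sigma_\perp\|_2\|\V\mu_{\rm pr}\|_{\M\Gamma_{\rm pr}^{-1}}$, using $\|\M{V}_\perp\|_2 = 1$ and $\|\M{U}_\perp\t\|_2 \le 1$. For the prior term, $\V{g} \equiv \M{U}_\perp\t\V{z} \sim \mc{N}(0,\M{I}_{m-k})$, and Jensen's inequality gives $\expect[\|\M\Sigma_\perp\V{g}\|_2] \le (\expect[\|\M\Sigma_\perp\V{g}\|_2^2])^{1/2} = (\mathrm{tr}(\M\Sigma_\perp^2))^{1/2} = \|\M\Sigma_\perp\|_F$. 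For the noise term, $\M{I}-\M{V}_k\M{V}_k\t$ is an orthogonal projector of rank $m-k$, so Jensen again yields $\expect[\|(\M{I}-\M{V}_k\M{V}_k\t)\V{w}\|_2] \le (\mathrm{tr}(\M{I}-\M{V}_k\M{V}_k\t))^{1/2} = \sqrt{m-k}$. Summing the three bounds and reinstating the common factor $\|(\M{S}\t\M{V}_k)^{-1}\|_2$ gives the claim. The main obstacle is the careful bookkeeping of the two independent Gaussian sources under a single expectation, together with tracking that $\M{U}_\perp, \M{V}_\perp, \M\Sigma_\perp$ are the $(m-k)$-dimensional subdominant blocks (recall $m < n$), so that the whitened vector $\V{g}$ has covariance exactly $\M{I}_{m-k}$ and the two trace identities deliver precisely $\|\M\Sigma_\perp\|_F$ and $\sqrt{m-k}$.
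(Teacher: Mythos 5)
Your proof is correct and follows essentially the same route as the paper: factor out $\|(\M{S}\t\M{V}_k)^{-1}\|_2$ via the interpolatory-projector identity $(\M{I}-\MB{P})=(\M{I}-\MB{P})(\M{I}-\M{V}_k\M{V}_k\t)$ together with $\|\M{I}-\MB{P}\|_2=\|\MB{P}\|_2$, then bound the expected orthogonal residual using Gaussian second moments to obtain the same three terms. The only difference is bookkeeping: the paper conditions on $\V{m}$ via the tower law, bounds the \emph{squared} norm, and finishes with Cauchy--Schwarz and subadditivity of square roots, whereas you whiten $\V{d}$ into independent standard Gaussian sources and apply the triangle inequality before Jensen --- both yield the identical bound.
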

\begin{proof}
    See \Cref{ssec:datacompproofs}.
\end{proof}

We now discuss an interpretation of this theorem. The absolute error in the completed data is small, in expectation, if the singular values in $\M\Sigma_\perp$ decay rapidly, and the amplification factor $\|(\M{S}\t \M{V}_k)^{-1}\|_2 $ is small. 
If \ac{srrqr} is used with parameter $f\ge 1$ to determine the selection operator $\M{S}$ as in \cref{alg:detcssp}, then the bound takes the form 
\begin{equation}\label{eqn:comp_srrqr} \expect_{\V{d}}\left[\|(\M{I}-\MB{P})\V{d}\|_{\M\Gamma_{\rm noise}^{-1}  }\right] \le  q_f(m,k) \left[ \|\M\Sigma_\perp\|_F +   \|\M\Sigma_\perp\|_2 \|\V\mu_{\rm pr}\|_{\M\Gamma_{\rm pr}^{-1}} + \sqrt{m-k}\right].\end{equation} 
It should also be noted that the completed data $\MB{P}\V{d}$ has a noise covariance matrix that is no longer uncorrelated; in particular the noise covariance is changed to $\MB{P}\M\Gamma_{\rm noise}\MB{P}\t = \eta^2 \M{V}_k (\M{S}\t\M{V}_k)^{-1}(\M{S}\t\M{V}_k)^{-\Tra}\M{V}_k\t$.

\paragraph{Approximate MAP estimate} The completed data $\MB{P}\V{d}$ can be used to define an approximate \ac{map} point 
\[ \widehat{\V{m}}_{\rm post} \equiv \M\Gamma_{\rm post}( \M{F}\t(\M\Gamma_{\rm noise}^{-1} \MB{P}\V{d}) + \M\Gamma_{\rm pr}^{-1}\V\mu_{\rm pr}). \]
The following corollary quantifies the error in the approximate \ac{map} point, which shows that the absolute error in the \ac{map} point has the same upper bound as that of the absolute error in the completed data. 

\begin{corollary}
    \label{cor:approxmap}
 The  error in the \ac{map} point in the $\|\cdot\|_{\M\Gamma_{\rm pr}^{-1}}$ norm computed using the \ac{bdeim} approach satisfies 
\[ \expect_{\V{d}}\left[\|\widehat{\V{m}}_{\rm post} - {\V{m}}_{\rm post} \|_{\M\Gamma_{\rm pr}^{-1}} \right] \le  q_f(m,k) \left[ \|\M\Sigma_\perp\|_F +  \|\M\Sigma_\perp\|_2 \|\V\mu_{\rm pr}\|_{\M\Gamma_{\rm pr}^{-1}} + \sqrt{m-k}\right] . \]   
\end{corollary}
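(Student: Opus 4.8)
The plan is to express the MAP error as a linear function of the data-completion residual $(\M{I}-\MB{P})\V{d}$ and then reduce directly to the bound already established in \Cref{thm:complete}, specifically its \ac{srrqr} form~\eqref{eqn:comp_srrqr}. First I would subtract the two MAP expressions. Since $\widehat{\V{m}}_{\rm post}$ and $\V{m}_{\rm post}$ differ only in that $\V{d}$ is replaced by $\MB{P}\V{d}$ inside the data term, the prior terms cancel and I obtain
\[ \widehat{\V{m}}_{\rm post} - \V{m}_{\rm post} = -\M\Gamma_{\rm post}\M{F}\t\M\Gamma_{\rm noise}^{-1}(\M{I}-\MB{P})\V{d}. \]

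Next I would convert the $\M\Gamma_{\rm pr}^{-1}$-norm into a spectral-norm estimate in terms of $\M{A}$. Writing $\|\cdot\|_{\M\Gamma_{\rm pr}^{-1}} = \|\M\Gamma_{\rm pr}^{-1/2}(\cdot)\|_2$ and using the identity $\M\Gamma_{\rm post} = \M\Gamma_{\rm pr}^{1/2}(\M{I}+\M{A}\M{A}\t)^{-1}\M\Gamma_{\rm pr}^{1/2}$ together with $\M\Gamma_{\rm pr}^{1/2}\M{F}\t = \M{A}\M\Gamma_{\rm noise}^{1/2}$ (both immediate from the definition $\M{A}\equiv\M\Gamma_{\rm pr}^{1/2}\M{F}\t\M\Gamma_{\rm noise}^{-1/2}$ in~\eqref{eqn:doptfull}), the coefficient matrix acting on the residual collapses to $(\M{I}+\M{A}\M{A}\t)^{-1}\M{A}\M\Gamma_{\rm noise}^{-1/2}$. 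Submultiplicativity of the spectral norm then yields
\[ \|\widehat{\V{m}}_{\rm post} - \V{m}_{\rm post}\|_{\M\Gamma_{\rm pr}^{-1}} \le \|(\M{I}+\M{A}\M{A}\t)^{-1}\M{A}\|_2\,\|(\M{I}-\MB{P})\V{d}\|_{\M\Gamma_{\rm noise}^{-1}}. \]
Diagonalizing the leading factor through the \ac{svd} of $\M{A}$ shows $\|(\M{I}+\M{A}\M{A}\t)^{-1}\M{A}\|_2 = \max_j \sigma_j(\M{A})/(1+\sigma_j(\M{A})^2) \le 1/2$, since $\sigma/(1+\sigma^2)$ is maximized at $\sigma = 1$.

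Finally I would take the expectation over $\V{d}$ and apply the \ac{srrqr} completion bound~\eqref{eqn:comp_srrqr} to the surviving factor $\expect_{\V{d}}[\|(\M{I}-\MB{P})\V{d}\|_{\M\Gamma_{\rm noise}^{-1}}]$. Because the scalar $1/2 \le 1$ is simply absorbed, the stated inequality follows, in fact with a factor of two to spare. I do not anticipate a genuine obstacle here, as the corollary is essentially a reduction to \Cref{thm:complete}; the only point requiring care is the algebraic rewriting of $\M\Gamma_{\rm post}\M{F}\t\M\Gamma_{\rm noise}^{-1}$ in terms of $\M{A}$, making sure the half-powers of $\M\Gamma_{\rm pr}$ and $\M\Gamma_{\rm noise}$ cancel correctly so that the uniformly bounded operator $(\M{I}+\M{A}\M{A}\t)^{-1}\M{A}$ emerges cleanly.
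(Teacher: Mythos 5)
Your proposal is correct and follows essentially the same route as the paper's proof: subtract the two MAP expressions, rewrite $\M\Gamma_{\rm pr}^{-1/2}\M\Gamma_{\rm post}\M{F}\t\M\Gamma_{\rm noise}^{-1}$ as $(\M{I}+\M{AA}\t)^{-1}\M{A}\M\Gamma_{\rm noise}^{-1/2}$, apply submultiplicativity, and invoke~\eqref{eqn:comp_srrqr}. The only (immaterial) difference is that you sharpen the operator-norm bound to $\|(\M{I}+\M{AA}\t)^{-1}\M{A}\|_2 \le 1/2$ where the paper settles for $\le 1$.
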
 
\begin{proof}
    See \Cref{ssec:datacompproofs}.
\end{proof}

\section{Proofs}\label{sec:proofs}

This section collects the proofs from various points in this article. Each subsection is dedicated to covering the proofs from a specific section. Some additional background in \Cref{sec:back} might help in understanding the proofs. 
\subsection{Proofs of \Cref{sec:cssp}}\label{ssec:csspproofs}
\begin{proof}[Proof of \cref{thm:aksbounds}]
Compute the pivoted QR factorization of $\M{V}_k\t$ as in~\eqref{eqn:vkpart}.  By assumption, $\M{V}_{11} \in \Rn{k \times k}$ is nonsingular. 
Similarly partition $\M{A}\M{\Pi} = \bmat{\M{C} & \M{T}}$. 
Write the matrix $\M{C}$ as 
$$
\M{C} = \M{A}\M{\Pi}_1 
= 
\M{U}\M{\Sigma}\M{V}^\Tra \M{\Pi}_1 
= 
\M{U} \bmat{\M\Sigma_k \M{V}_k^\Tra \M\Pi_1 \\ \M\Sigma_\perp \M{V}_\perp^\Tra\M{\Pi}_1 }
=
\M{U} \bmat{\M\Sigma_k \M{V}_{11}\\ *}
.
$$
Multiplication by an orthogonal matrix does not change the singular values, so
$$
\sigma_j(\M{C}) = \sigma_j \left( \bmat{\M\Sigma_k \M{V}_{11} \\ * }\right)  \ge \sigma_j (\M{\Sigma}_k \M{V}_{11})\qquad 1 \le j \le k.
$$
We have used the fact that the singular values of a submatrix are smaller than the singular values of the matrix in which it is contained. 
Next, write $\M\Sigma_k = \M\Sigma_k  \M{V}_{11}\M{V}_{11}^{-1}$,  and apply the singular value inequalities~\cite[Problem III.6.2.]{bhatia2013matrix}
$$
\sigma_j (\M\Sigma_k) \le   \sigma_j (\M\Sigma_k  \M{V}_{11}) \norm{\M{V}_{11}^{-1}}_2, \qquad 1 \le j \le k.
$$
Combine with the previous inequality to get 
$$
\frac{\sigma_j(\M{A}) }{\norm{\M{V}_{11}^{-1}}_2}  \le \sigma_j(\M{C}) \le \sigma_j(\M{A}), \qquad 1 \le j \le k.  
$$ 
Here $\sigma_j(\M{A}) = \sigma_j(\M\Sigma_k)$ are the singular values of $\M{A}$. 
The upper bound follows from applying the singular value inequality directly to $\M{C}$. 
The desired bound follows from the properties of the $\logdet$.
\end{proof}

\subsection{Proofs of \Cref{sec:sensor}}\label{ssec:sensorproofs}
\begin{proof}[Proof of \cref{thm:randoed}]
    Only the first inequality needs to be proven, since the rest follows from \cref{thm:aksbounds}. We first prove that $\M{Y}$ has at least rank $k$ with probability $1$ by deriving lower bounds for its singular values.

    Given the partitioned SVD of $\M{A}$~\eqref{eqn:svd}, we can write 
    \[ \M{Y} = \M\Omega \M{A} = \bmat{ (\M{\Omega U}_k)\M\Sigma_k & (\M\Omega\M{U}_\perp) \M\Sigma_\perp}\M{V}\t.   \]
    The matrix $\M\Omega\M{U}_k \in \R^{d \times k}$ has the same distribution as a $d\times k$ Gaussian random matrix with independent entries drawn from $\mc{N}(0,1/d)$, since the Gaussian distribution is rotationally invariant. Therefore, this matrix has full column rank with probability $1$, so it has a left multiplicative inverse. From $\M\Sigma_k = (\M{\Omega U}_k)^\dagger (\M{\Omega U}_k) \M\Sigma_k$, we have 
    \[ \sigma_j(\M{Y}) \ge \sigma_j ( (\M{\Omega U}_k)\M\Sigma_k) \ge \frac{\sigma_j(\M\Sigma_k)}{ \|(\M{\Omega U}_k)^\dagger\|_2} = \frac{\sigma_j(\M{A})}{ \|(\M{\Omega U}_k)^\dagger\|_2} \qquad 1 \le  j \le k. \]
    Since $\rank{\M{A}} \ge k$, this shows $\rank{\M{Y}} \ge k$ with probability $1$. We can then apply \ac{srrqr} on $\M{Y}$ with parameter $f \ge 1$. Then 
    \[ \M{Y} \bmat{\M\Pi_1 & \M\Pi_2} = \bmat{\M{Q}_1 & \M{Q}_2} \bmat{\M{R}_{11} & \M{R}_{12} \\ & \M{R}_{22}}.\]
   By the guarantees of \ac{srrqr} $\sigma_j(\M{Y\Pi}_1) \ge \sigma_j(\M{Y})/q_f(m,k)$ for $1 \le j \le k$. Finally, the singular value inequalities imply $\sigma_j(\M{Y\Pi}_1) \le \|\M\Omega\|_2 \sigma_j(\M{A\Pi}_1)$ for $1 \le j \le k$. Putting all the intermediate steps together, we get 
    \[ \sigma_j(\M{A\Pi}_1) \ge \frac{\sigma_j(\M{Y\Pi}_1)}{ \|\M\Omega \|_2 }  \ge \frac{\sigma_j(\M{Y})}{q_f(m,k)\|\M\Omega \|_2 } \ge \frac{\sigma_j(\M{A})}{q_f(m,k) \|\M\Omega \|_2 \|(\M{\Omega U}_k)^\dagger\|_2}  \qquad 1 \le j \le k.\] 
     By a simple modification of the proof technique of~\cite[Theorem 5.8]{gu2015subspace}, we can show with probability at least $1-\delta$, $\|\M\Omega \|_2 \|(\M{\Omega U}_k)^\dagger\|_2\le C_g$. For completeness, we give the details in \Cref{ssec:proofdetails}. 
    The bound is completed by plugging in the bound for $\|\M\Omega \|_2 \|(\M{\Omega U}_k)^\dagger\|_2$ and using the properties of the log-determinant.
\end{proof}

\subsection{Proofs of \Cref{sec:rand}}
\label{ssec:random_unwproofs}

\begin{proof}[Proof of \cref{cor:random_unweighted}]
Since the columns of $\M{C}_{\rm hyb}$ is a subset of those in $\M{C}_{\rm lev}$, follows that  $\M{C}_{\rm hyb}\M{C}_{\rm hyb}\t \preceq \M{C}_{\rm lev} \M{C}_{\rm lev}\t$, 
\revised{so $\phi(S_{\rm hyb}) \le \phi(S_{\rm lev})$.} 
From \cref{thm:aksbounds} with the selection operator $\M{S\Pi}_1$, we see that 
\revised{
\[ \Psi(\M\Sigma_k/\|(\M{V}_k\t\M{S\Pi}_1)^{-1}\|_2 ) \le \phi(S_{\rm hyb}).\] 
}
We then need a lower bound for $\|(\M{V}_k\t\M{S\Pi}_1)^{-1}\|_2$. Let $\M{D}_1\in \R^{k\times k}$ be a diagonal matrix such that $\M{SD\Pi}_1 = \M{S\Pi}_1\M{D}_1$; this diagonal matrix $\M{D}_1$ is a submatrix of $\M{D}$ corresponding to the entries of $\M\Pi_1$. 
    
Recall $\Mh{S} = \M{SD\Pi}_1$. First, we show $\M{V}_k\t\Mh{S}$ is invertible with high probability and derive a bound for $\|(\M{V}_k\t\Mh{S})^{-1}\|_2.$ From~\cite[Theorem 6.2]{holodnak2015randomized}, with a probability at least $1-\delta$, we have $\sigma_k(\M{V}_k\t\M{SD}) \ge \sqrt{1-\epsilon}$. Applying \ac{srrqr} with parameter $f \ge 1$, we have 
\begin{equation}
    \label{eqn:levinter}
    \frac{\sigma_k(\M{V}_k\t\M{SD})}{q_f(s,k)} \le \sigma_k(\M{V}_k\t\M{SD\Pi}_1), 
\end{equation} 
so that with probability at least $1-\delta$, $\|(\M{V}_k\t\Mh{S})^{-1}\|_2 \le q_f(s,k)/\sqrt{1-\epsilon}$ .  
Furthermore,  $\|\M{D}_1\|_2 \le \|\M{D}\|_2 \le \max_{1\le j \le m} (1/\sqrt{s\pi_j}) \le \sqrt{(2m/s)}$. Putting these intermediate results together, we have probability at least $1-\delta$
    \begin{equation}\label{eqn:unweightedinter}
        \|(\M{V}_k\t\M{S\Pi}_1)^{-1}\|_2 \le \|(\M{V}_k\t\M{SD\Pi}_1)^{-1}\|_2\|\M{D}\|_2 \le q_f^U(m,s,k).
    \end{equation}  
    We have used $(\M{V}_k\t\M{S\Pi}_1)^{-1} = \M{D}_1 (\M{V}_k\t\M{S\Pi}_1\M{D}_1)^{-1} = \M{D}_1 (\M{V}_k\t\M{SD\Pi}_1)^{-1}$ and $\|\M{D}_1\|_2 \le \|\M{D}\|_2$. This establishes the bound.  
\end{proof}

\subsection{Proofs of \Cref{sec:datacomp}}\label{ssec:datacompproofs}

\begin{proof}[Proof of \Cref{thm:complete}]
From $\MB{P}\M{V}_k\M{V}_k\t = \M{V}_k\M{V}_k\t$ , we have $(\M{I} - \MB{P}) = (\M{I} - \MB{P})(\M{I} - \M{V}_k\M{V}_k\t).$ Therefore, using submultiplicativity 
\[ \begin{aligned}\|(\M{I}-\MB{P})\V{d}\|_2 \le & \> \|\M{I} - \MB{P}\|_2 \| ( \M{I} - \M{V}_k\M{V}_k\t)\V{d}\|_2 \\ = & \> \|(\M{S}\t \M{V}_k)^{-1}\|_2   \| ( \M{I} - \M{V}_k\M{V}_k\t)\V{d}\|_2. \end{aligned}\] 
In moving from the first equation to the second, we have used~\cite[Theorem 1]{szyld2006many} which says $\|\M{I} - \MB{P}\|_2 = \|\MB{P}\|_2$. The theorem applies since $1 \le k < m$, so $\MB{P} \neq \M{0}, \M{I}$ and, furthermore, $\|\MB{P}\|_2 = \|(\M{S}\t \M{V}_k)^{-1}\|_2$ since the columns of $\M{V}_k$ and $\M{S}$ are orthonormal. Taking expectations 
\begin{equation}\label{eqn:exinter1}\expect_{\V{d}}\left[\|(\M{I}-\MB{P})\V{d}\|_2^2\right] \le \|(\M{S}\t \M{V}_k)^{-1}\|_2^2 \expect_{\V{d}}\left[ \| ( \M{I} - \M{V}_k\M{V}_k\t)\V{d}\|_2^2\right]. \end{equation}
Using the tower law of conditional expectations, we get 
\[ \expect_{\V{d}}\left[ \| ( \M{I} - \M{V}_k\M{V}_k\t)\V{d}\|_2^2\right] =  \expect_{\V{m}}\left\{\expect_{\V{d}|\V{m}}\left[ \| ( \M{I} - \M{V}_k\M{V}_k\t)\V{d}\|_2^2\right]\right\}.\] 
We tackle the inner expectation first. Note that if $\V{x} \sim \mathcal{N}(\V\mu, \M\Gamma)$, then \begin{equation}\label{eqn:expectsq}\expect[\|\V{x}\|_2^2] = \trace(\M\Gamma) + \|\V{\mu}\|_2^2.\end{equation}
Using $\V{d}|\V{m} \sim \mathcal{N}(\M{F}\V{m}, \M\Gamma_{\rm noise})$  and~\eqref{eqn:expectsq} we get
\[ \begin{aligned} \expect_{\V{d}|\V{m}}\left[ \| ( \M{I} - \M{V}_k\M{V}_k\t)\V{d}\|_2^2 \right]= & \>  \trace( ( \M{I} - \M{V}_k\M{V}_k\t)\M\Gamma_{\rm noise}( \M{I} - \M{V}_k\M{V}_k\t) ) \\
& \qquad + {\|( \M{I} - \M{V}_k\M{V}_k\t)\M{F}\V{m}\|_2^2}. \end{aligned}\]
 Since $\M\Gamma_{\rm noise} = \eta^2 \M{I}$,  and $\M{I} - \M{V}_k\M{V}_k\t$ is an orthogonal projector, the trace term simplifies to $\eta^2 (m-k)$. Therefore, 
\[  \expect_{\V{m}}\left\{\expect_{\V{d}|\V{m}}\left[ \| ( \M{I} - \M{V}_k\M{V}_k\t)\V{d}\|_2^2\right]\right\}=  \eta^2 (m-k) + \expect_{\V{m}}\left[ \| ( \M{I} - \M{V}_k\M{V}_k\t)\M{F}\V{m}\|_2^2\right]. \]
The expectation can be computed explicitly once again using~\eqref{eqn:expectsq} as  
\[\begin{aligned}\expect_{\V{m}}\left[ \| ( \M{I} - \M{V}_k\M{V}_k\t)\M{F}\V{m}\|_2^2\right] = & \>  \trace(( \M{I} - \M{V}_k\M{V}_k\t)\M{F\Gamma}_{\rm pr} \M{F}\t( \M{I} - \M{V}_k\M{V}_k\t) )  \\
& \> \qquad +  \|(\M{I} - \M{V}_k\M{V}_k\t)\M{F}\V\mu_{\rm pr}\|_2^2 \\
= & \eta^2 \trace(( \M{I} - \M{V}_k\M{V}_k\t)\M{A}\t\M{A}( \M{I} - \M{V}_k\M{V}_k\t) ) \\
& \> \qquad + \eta^2 \|(\M{I} - \M{V}_k\M{V}_k\t)\M{A}\t\M\Gamma_{\rm pr}^{-1/2}\V\mu_{\rm pr}\|_2^2   \\
\le & \>  \eta^2 \|\M\Sigma_\perp\|_F^2 + \eta^2 \|\M\Sigma_\perp\|_2^2 \|\V\mu_{\rm pr}\|_{\M\Gamma_{\rm pr}^{-1}}^2 .  \end{aligned}\]
Finally, this gives $\expect_{\V{d}}\left[ \| ( \M{I} - \M{V}_k\M{V}_k\t)\V{d}\|_2^2\right] = \eta^2 [ \|\M\Sigma_\perp\|_F^2  + \eta^2 \|\M\Sigma_\perp\|_2^2 \|\V\mu_{\rm pr}\|_{\M\Gamma_{\rm pr}^{-1}}^2+  (m-k)]$.  
Plug into~\eqref{eqn:exinter1}. The bound follows by using the Cauchy-Schwartz inequality and subadditivity of square roots.
\end{proof}

\begin{proof}[Proof of \Cref{cor:approxmap}]
    Simple algebraic manipulations show
  \[  \begin{aligned}
        \M\Gamma_{\rm pr}^{-1/2}(\V{m}_{\rm post} - \widehat{\V{m}}_{\rm post})   = & \>  \eta^{-1} \M\Gamma_{\rm pr}^{-1/2} \M\Gamma_{\rm post}\M{F}\t (\eta^{-1}(\V{d}-\MB{P}\V{d} ) ) \\
         = & \>  (\M{AA}\t + \M{I})^{-1}\M{A} [\eta^{-1}(\V{d}-\MB{P}\V{d} ) ].
    \end{aligned}\]
   Apply submultiplicativity and~\eqref{eqn:comp_srrqr}. The bound follows from $\|(\M{AA}\t + \M{I})^{-1}\M{A}\|_2 \le 1$.
\end{proof}

\section{Experimental results}
\label{sec:exp}
In this section, we demonstrate numerically the efficacy of proposed methods\footnote{Code to reproduce our results is available from \url{https://github.com/RandomizedOED/css4oed} (see also \Cref{sec:codeloc}).}.

\begin{figure}[!ht]
    \centering
    \small
    \includegraphics[width=\textwidth]{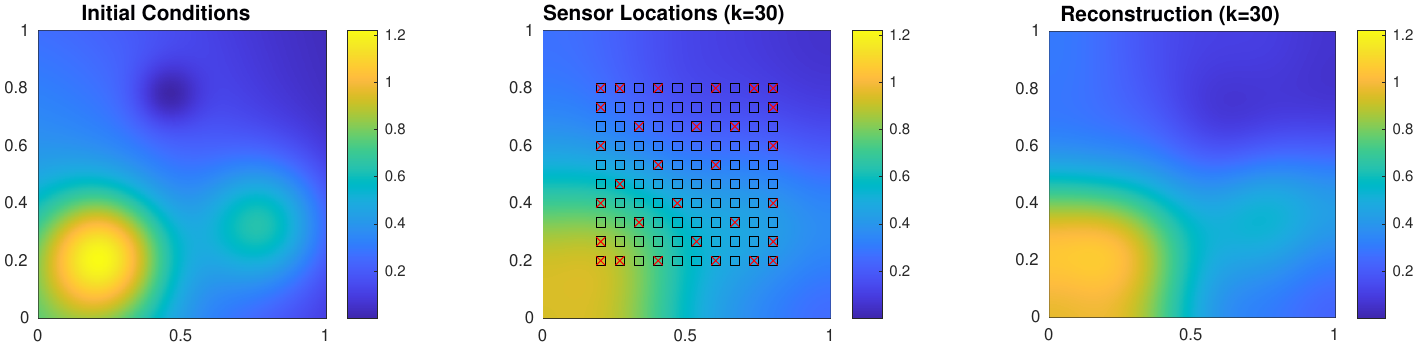}
    \caption{Problem setup for the Heat problem. The left panel shows the true initial conditions (Franke's function). The middle panel shows the true state along with the sensor locations as black squares. The red sensors are the ones selected by RandGKS. The right shows the reconstruction from these 30 sensors.}
    \label{fig:diffusionsetup}
\end{figure}

We conducted our numerical experiments on two different test problems from the AIR Tools II package~\cite{hansen2018air} through the IR Tools interface~\cite{gazzola2019ir}.

\begin{figure}[!ht]
    \centering
    \small
    \includegraphics[width=\textwidth]{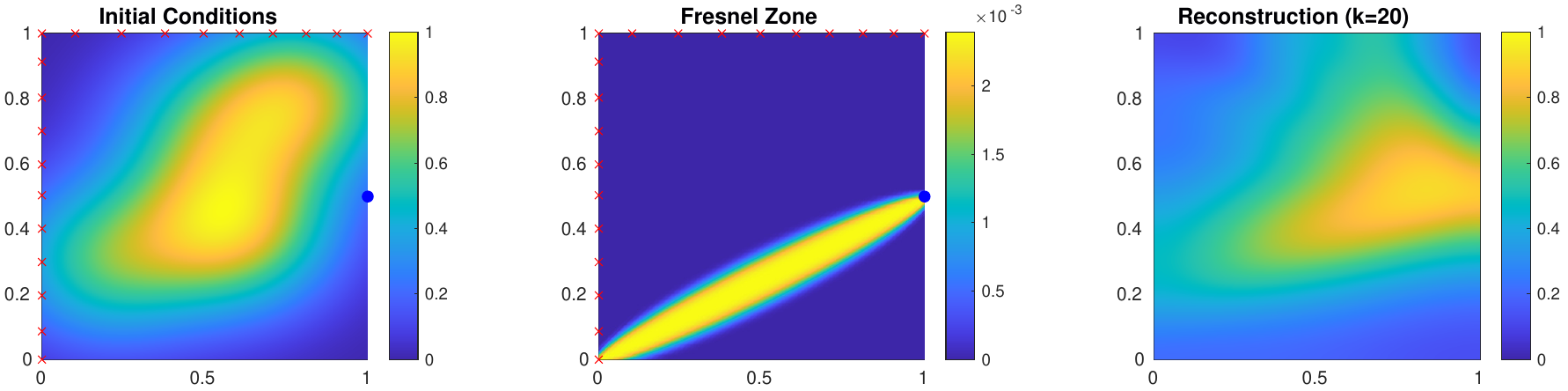}
    \caption{Problem setup for the seismic tomography problem. The left panel shows the true image along with the blue source on its right boundary. 256 receivers are uniformly placed on the left and top boundaries (a subset of receivers are denoted by red stars). The 20 red locations are selected by RandGKS. A Fresnel zone for one selected source-receiver pair is shown in the middle panel. The right panel contains the reconstruction from the selected sensors.} 
    \label{fig:seismicsetup} 
\end{figure}
  \paragraph{Application 1: Heat Equation} In this test problem, we consider a 2D heat equation
        \[ \frac{\partial u}{\partial t} = \Delta u \qquad \V{x} \in (0,1)^2, \]
    with homogeneous Neumann boundary conditions. 
    The domain is discretized into a grid of size $65^2 = 4225$ degrees of freedom. 
    The PDE is discretized using linear finite elements. 
    Data is collected at a discrete set of $100$ measurement locations at a final time $T = 0.01$ after $100$ time steps, and the inverse problem is to recover the initial condition.  A plot of the true initial condition, the candidate sensor locations, and the reconstruction is given in \cref{fig:diffusionsetup}. 
    \paragraph{Application 2: Seismic travel-time tomography} The goal in this form of tomography is to determine sub-surface attenuation in a domain shown as an $N\times N$ image represented by the flattened vector $\V{x} \in \Rn{n}$ with $n = N^2$ (see \cref{fig:seismicsetup}). 
    The domain is discretized into a grid of size $65^2 = 4225$ with a single source on the right boundary.
    Furthermore, $256$ receivers are located on the left and top boundaries with uniform spacing. The waves are assumed to travel between the source and receiver within the first Fresnel zone, which is shaped as an ellipse with focal points at the source and receiver (see middle panel of \cref{fig:seismicsetup}).
    The travel time is modeled as an integration of attenuation coefficients over the Fresnel zone.
    This line integral is discretized using a sensitivity kernel. 

We now briefly describe other problem settings. To simulate measurement error, $2\%$ noise is added to the data. We utilize the same prior distributions for both test problems. The prior mean is taken to be zero and the prior covariance matrix is taken to be $\M\Gamma_{\rm pr}^{-1} = \alpha \M{KM}^{-1}\M{K}$, where $\M{K}$ is a finite element stiffness matrix corresponding to the operator $(-\Delta + \kappa^2)$ with Neumann boundary conditions, and $\M{M}$ is the mass matrix. 
We use the following values for the parameters, $\kappa^2 = 80$ and $\alpha = 0.1$. 
A justification for this form of the prior covariance matrix is given in~\cite{bui2013computational}. We report the relative error of the reconstruction in the 2-norm, i.e., $\|\V{m} - \widehat{\V{m}}\|_2/ \|\V{m}\|_2$.

\subsection{Varying number of sensors}
\label{sec:exp_varyk}
We first consider the performance of the RandGKS algorithm as we increase the number of sensors selected. \textbf{RandGKS} is an implementation of \cref{alg:detcssp} where a randomized \ac{svd} algorithm (with $q=1$ subspace iterations and oversampling $p=20$) is used to compute the right singular vectors of $\M{A}$  and \ac{qrcp} is used for computing the pivot indices that determine the sensor placement. We compare RandGKS against the performance of the full operator (Full) in which all sensors are selected.

\begin{figure}[!ht]
    \centering
    \includegraphics[scale=0.5]{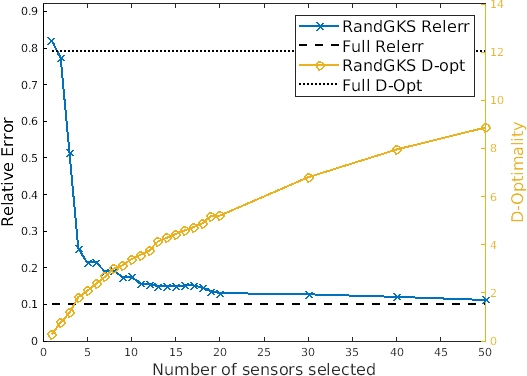}
    \caption{D-optimality and relative error changes in the RandGKS algorithm with an increasing number of sensors $k$ for the Heat problem. The relative error decreases and the D-optimal criterion increases with increasing $k$. Notice that we achieve good reconstructions with as few as $20$ sensors with respect to the full operator but D-optimality is smaller than that of the full operator.}
    \label{fig:diffusionvaryk}
\end{figure}

\Cref{fig:diffusionvaryk} shows both the relative error and D-optimal criterion as we vary $k$  from $1$ to $50$ for the Heat problem. On the one hand, moving from left to right, the relative error shows a sharp decrease with an increasing number of sensors $k \le 20$, and then plateaus which suggest diminishing returns. On the other hand, the D-optimality increases with increasing number of sensors.  
The RandGKS method can achieve reasonably good reconstructions with as few as $k=20$ sensors when compared with Full; however, RandGKS yields a much smaller D-optimal solution compared to Full. Even though there is no improvement in accuracy in terms of relative error, additional information collected from sensors can help reduce the uncertainty (i.e., increased D-optimality). 

\begin{figure}[!ht]
    \centering
    \includegraphics[scale=0.35]{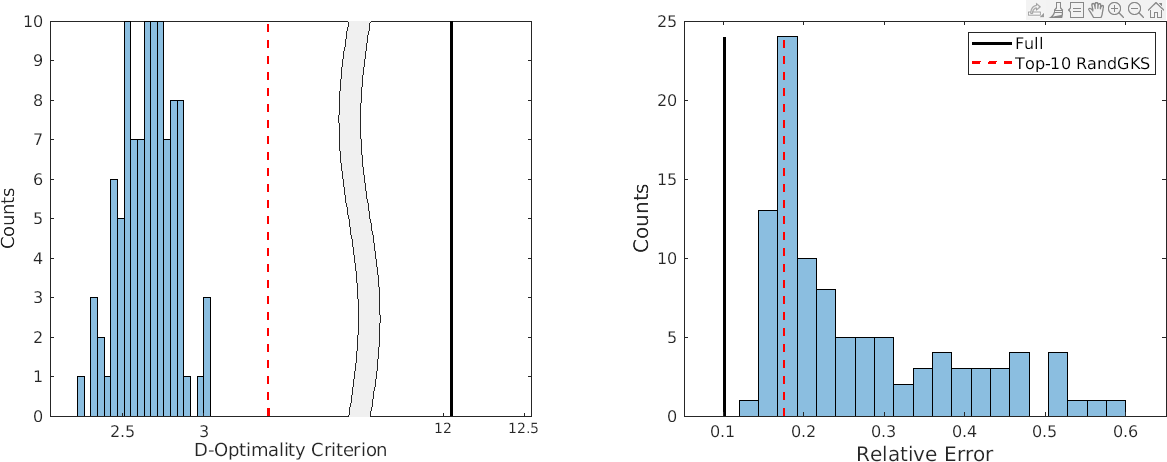}
    \includegraphics[scale=0.35]{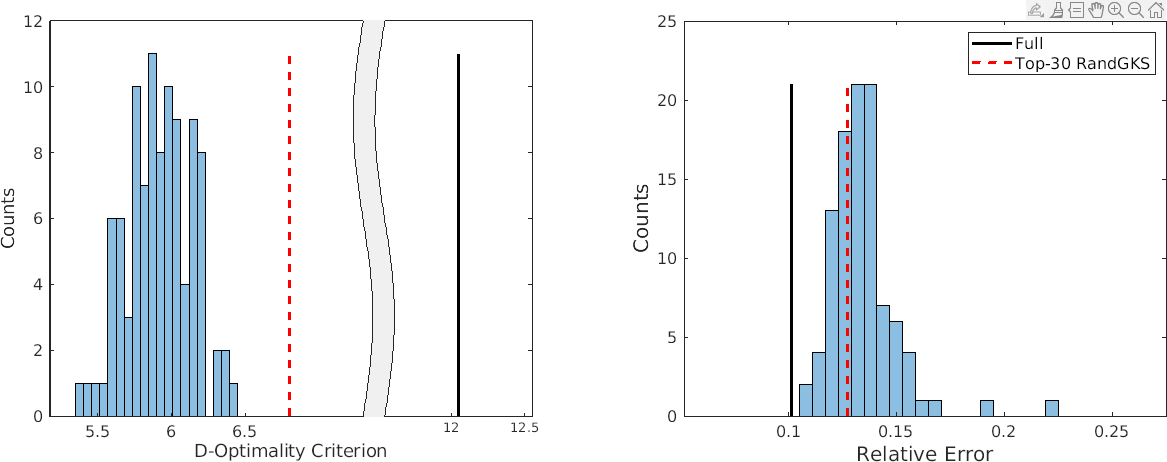}
    \caption{Evaluating the RandGKS algorithm against random sensor placements in the Heat test problem. We include $100$ random sensor selections, with $k = 10$ (top row) and $k=30$ (bottom row), to plot the D-optimality and relative error histograms. RandGKS always has the highest D-optimality and reasonable relative error for all $k$. }
    \label{fig:diffusioncmprand}
\end{figure}

Next, we investigate two specific choices of $k$ ($k=10$ and $k=30$) and compare RandGKS against $100$ random sensor placements in \cref{fig:diffusioncmprand}. 
The results are displayed in \cref{fig:diffusioncmprand} with D-optimality results on the left panel and relative errors on the right; furthermore, the top row corresponds to $k=10$ and the bottom row corresponds to $k=30$. The histogram of the random designs is plotted in blue, along with a solid black line indicating Full and a dashed red line indicating RandGKS. We first consider the D-optimal criterion. We can see that RandGKS beats all the random designs in terms of D-optimality but is far from that of the full operators for both $k=10$ and $k=30$. Next, we consider the relative errors. Here, RandGKS beats most but not all the random designs. This is expected as we are optimizing only for D-optimality and not reconstruction with our algorithms. The discrepancy with the full operator in terms of relative error is not as drastic as that of D-optimality.

\subsection{Comparison of the different methods}\label{sec:algcmp}
In this set of experiments, we compare RandGKS with other methods proposed in this paper and existing methods. We give a brief description of the settings for the algorithms in \Cref{ssec:setup}. 

\begin{table}[!ht]
\small
\centering
\caption{Comparison of the algorithms on Heat ($k=30$) and Seismic ($k=50$).}
\begin{tabular}{c|c|c|c|c|c}
\toprule
Problem               & Algorithm & D-optimality & Relative Error & $\norm{\M{V}_{11}^{-1}}_2$ & Time \\
\midrule
\multirow{5}{*}{Heat} & Full      & 12.0491      & 0.1015         & ---         & ---         \\
                      & RandGKS   & 6.8017       & 0.1274         & 5.8193      & 124.5957 s  \\
                      & Hybrid    & 6.6621       & 0.1194         & 6.6528      & 122.8767 s  \\
                      & Greedy    & 7.0857       & 0.1319         & $\infty$    & 1348.8915 s \\
                      & RAF       & 6.6441       & 0.1263         & 10.413      & 41.6364 s   \\
\midrule
\multirow{5}{*}{Seismic} & Full      & 13.1040      & 0.2934         & ---         &  --- \\
                         & RandGKS   & 5.6962       & 0.2896         & 2.9166      & 1.9811 s \\
                         & Hybrid    & 5.6753       & 0.2875         & 10.583      & 2.1143 s \\
                         & Greedy    & 5.9211       & 0.2956         & 4.7025e+07  & 66.5252 s \\
                         & RAF       & 5.6760       & 0.2930         & 15.636      & 0.6742 s \\
\bottomrule
\end{tabular}
\label{tbl:algcmp}
\end{table}

In \cref{tbl:algcmp}, we display the D-optimal criteria and relative errors for our \ac{oed} methods and the full operator. This table reports results for both Heat and Seismic problems with $k=30$ and $k=50$ sensors, respectively. Note that the results of Full and RandGKS have been previously reported already, but we are including them for comparison. All methods perform similarly in both metrics. We see that for both Heat and Seismic, the Greedy method has the largest D-optimality while Hybrid has the least relative error. The \ac{raf} algorithm's performance is comparable to the other methods, even though it is much cheaper. We also report the value of $\norm{\M{V}_{11}^{-1}}_2$ which appears in the denominator of the lower bound from \cref{thm:aksbounds}. While the Greedy method performed well in our experiments in terms of D-optimality, the value of $\norm{\M{V}_{11}^{-1}}_2$ can be large, suggesting that Greedy may select (nearly) dependent columns.  Finally, we report the Wall clock times taken by the different methods to perform \ac{oed}. \ac{raf} is clearly the fastest method, with the other randomized algorithms performing reasonably. The Greedy method is the most expensive due to the large number of \ac{pde} evaluations involved in its computation. Note that in our implementation, we assumed that the operator $\M{A}$ is not formed explicitly.  Further  details of the computing environment and timing studies are in \cref{ssec:timing}.

\subsection{Data completion}
\begin{figure}[!ht]
    \centering
    \includegraphics[width=0.47\textwidth]{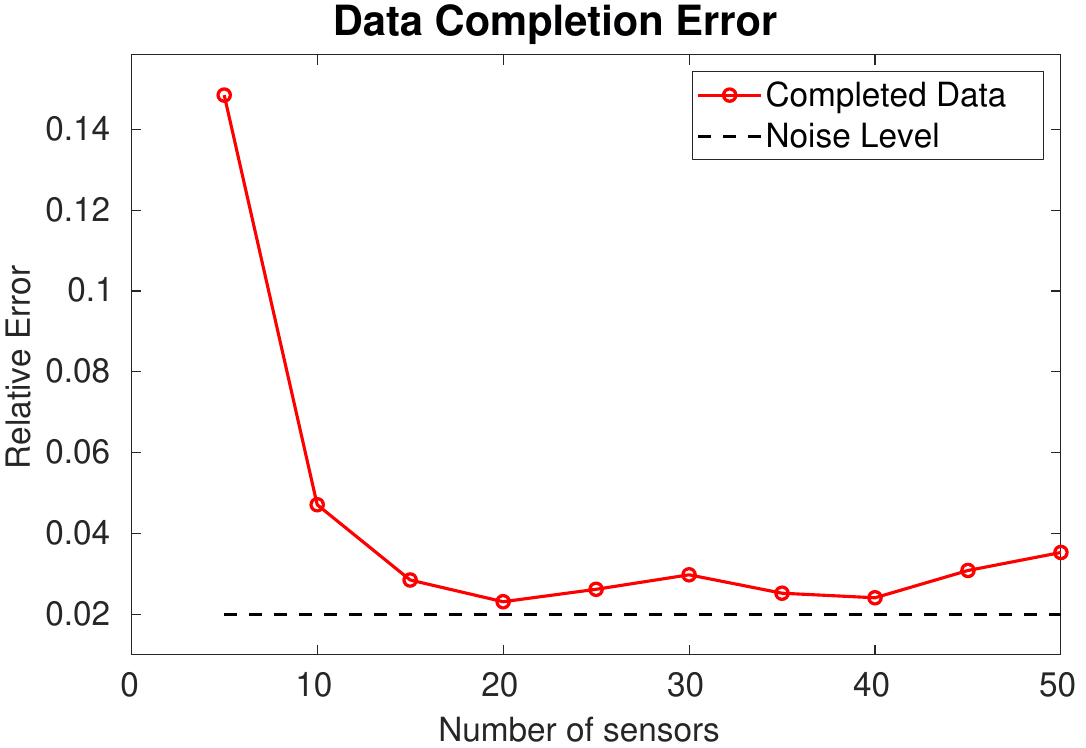}
    \includegraphics[width=0.47\textwidth]{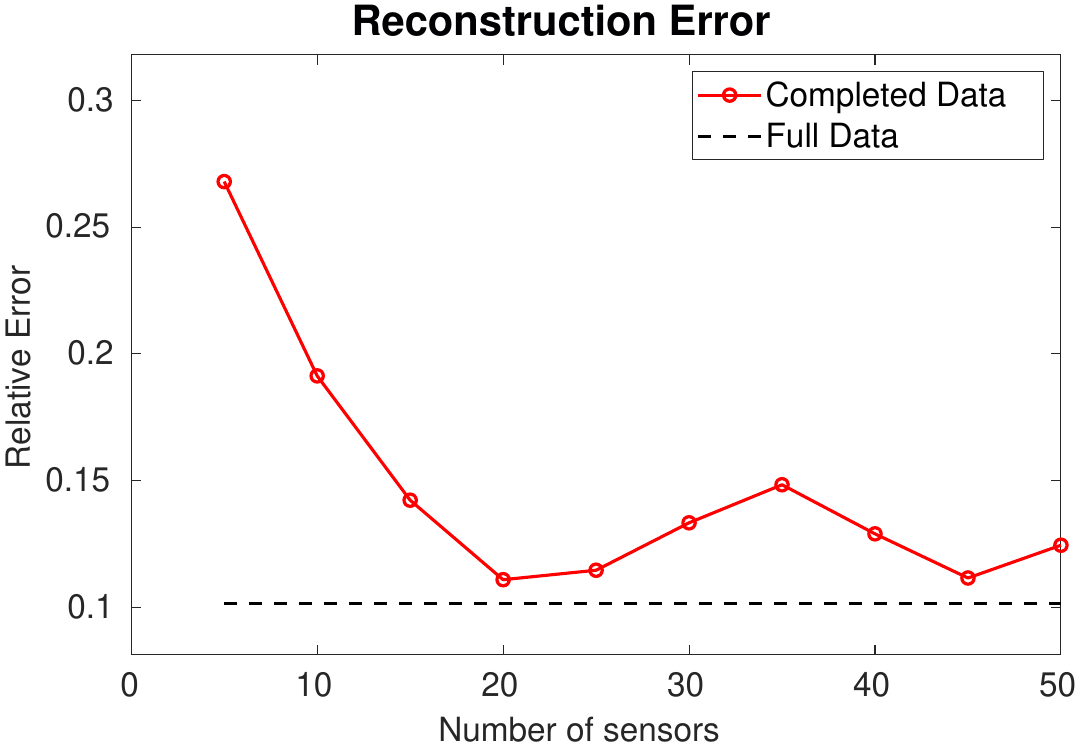}
    \caption{Results from the data completion experiments for the Heat problem. (left) The error in the completed data with increasing number of sensors,  and (right) the reconstruction error using the completed data to compute the MAP estimate. }
    \label{fig:diffusiondatacomp}
\end{figure}
Next, we consider the performance of the \ac{bdeim} approach to complete the data applied to the Heat problem (see \cref{sec:datacomp}).
We increase the number of sensors selected from $5$ to $50$ using the RandGKS algorithm and plot the relative errors in the completed data (left panel) and reconstruction (right panel) in \cref{fig:diffusiondatacomp}.
The relative error for data completion is defined as $\norm{\V{d}-\MB{P}\V{d}}_2/\norm{\V{d}}_2$, where $\V{d}$ is the data from all sensors, and $\MB{P}$ is the \ac{bdeim} projector.
The reconstruction error is computed between the \ac{map} estimate obtained by using $\MB{P}\V{d}$ as data and the true initial conditions.
From the left panel we can see that data completion improves while $k \le 20$ before stagnating at around 2\%.
Recall that 2\% noise has been added to the data, which naturally limits the data recovery.
We see similar trends in the right panel with the \ac{map} estimate.
The reconstruction error improves till around $20$ sensors before stagnating close to the error obtained by using the full operator. 
\begin{figure}[!ht]
    \centering
    \includegraphics[width=0.47\textwidth]{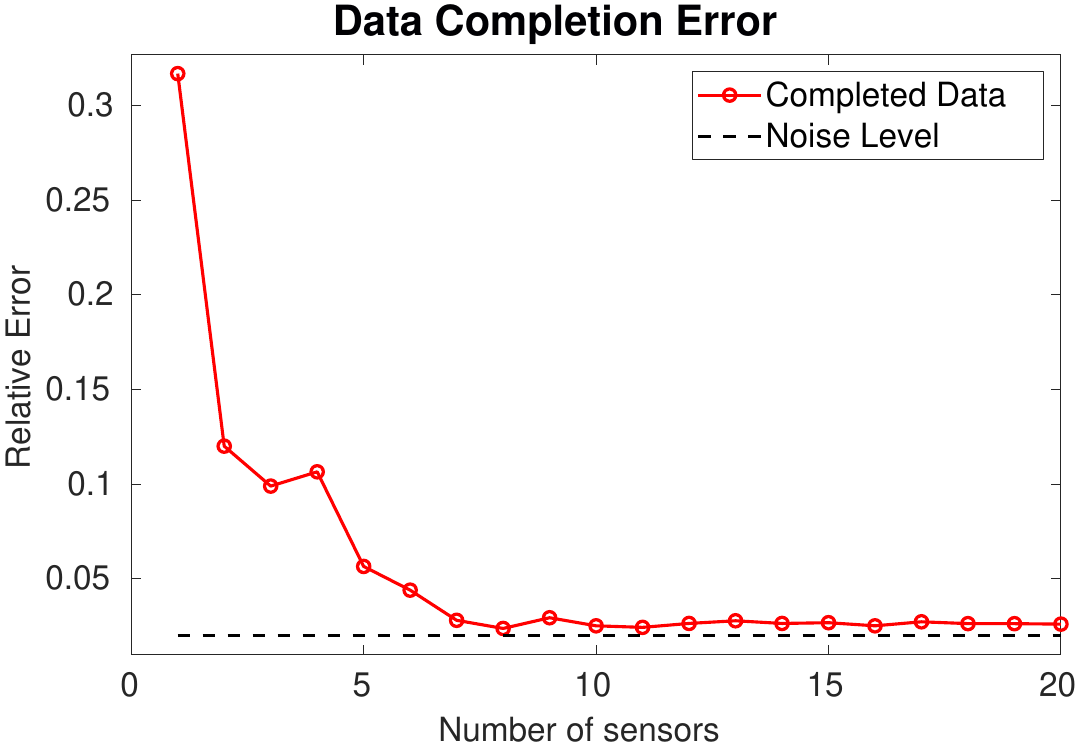}
    \includegraphics[width=0.47\textwidth]{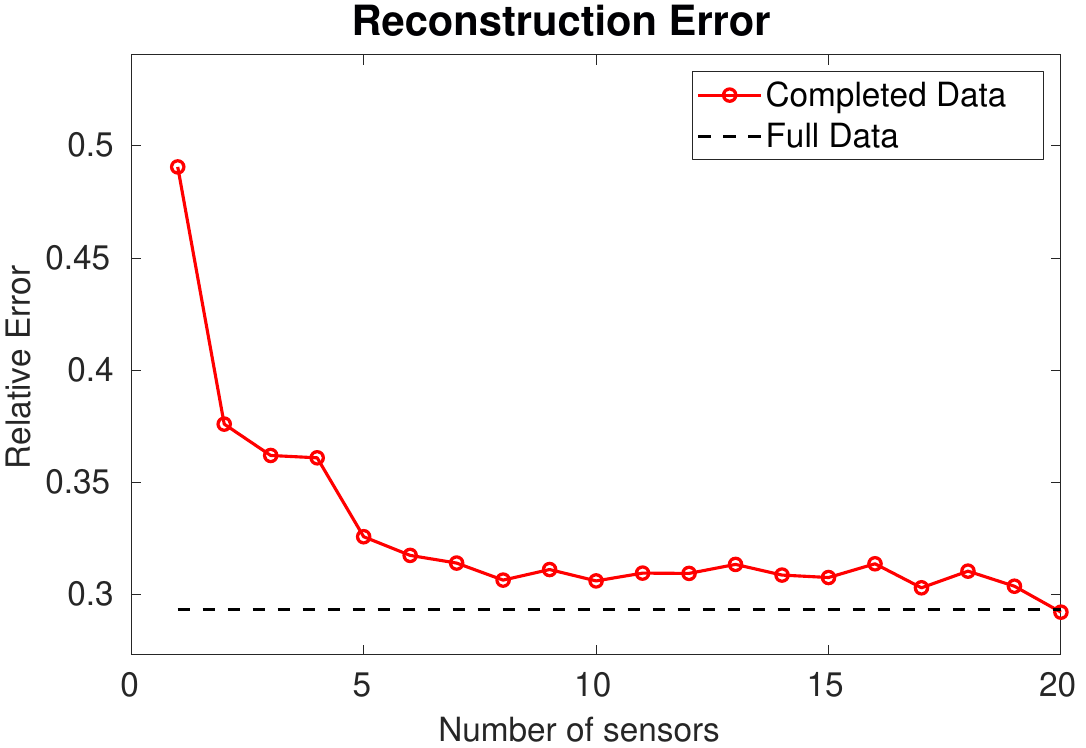}
    \caption{Data completion experiments for Seismic with (left) error in the completed data versus number of sensors and (right) reconstruction error when the completed data is used to find the \ac{map} point. In both cases the error decreases till 8 sensors and then stagnates.}
    \label{fig:seismicdatacomp}
\end{figure}
We also report the performance of \ac{bdeim} on the Seismic problem as we increase the number of sensors used from $1$ to $20$ in \cref{fig:seismicdatacomp}.
We are able to complete the data up to the $2\%$ noise level and recover the \ac{map} estimate with almost the same accuracy as the full operator.

\section{Conclusions}
\label{sec:conc}
This paper presents several algorithms for D-optimal sensor placement in Bayesian inverse problems by leveraging connections to \ac{cssp}. We established several algorithms---both deterministic and randomized---with provable guarantees. The algorithms are computationally efficient, easy to implement, and do not require much tuning. Numerical experiments demonstrate the performance of the methods. There are many avenues for future work. First, can we extend this to other criteria, such as A-optimality and goal-oriented? Second, are these techniques applicable to non-linear inverse problems? Third, can we extend the techniques to correlated noise? A possible way forward is to consider the techniques in~\cite[Section 4.5]{drmac2018discrete}. 
Fourth, based on the connection to \ac{cssp}, can other techniques such as volume-sampling and determinantal point processes~\cite{belhadji2020determinantal} offer tangible benefits?

\section*{Acknowledgments}
We would like to thank Ahmed Attia for his help with PyOED and also thank Alen Alexanderian, Wendy Di, Jayanth Jagalur, and Youssef Marzouk for helpful discussions. 
\appendix
\label{sec:appendix}

\section{Some background results}\label{sec:back}
Let $\M{C},\M{D}\in \R^{n\times n}$ be symmetric matrices. We say that $\M{C} \preceq \M{D}$ (alternatively, $\M{D} \succeq \M{C}$) if $\M{D}-\M{C}$ is positive semidefinite; see~\cite[Chapter 7.7]{horn2013matrix} for additional details. By Weyl's inequality, $\lambda_i(\M{C})\le \lambda_i(\M{D})$ for $1\le i \le n$, where $\lambda_i(\M{C})$ denote the eigenvalues of $\M{C}$ in decreasing order. For short, we also write $\M{C} \succeq \M{0}$ if $\M{C}$ is symmetric positive semidefinite. We also have $\M{XCX}\t \preceq \M{XDX}\t$ where $\M{X} \in \R^{m\times n}$. If $\M{0}\preceq \M{C} \preceq \M{D}$, then $(\M{I}+\M{C})^{-1} \succeq (\M{I}+\M{D})^{-1}$. Similarly, this also implies~\cite[Lemma 9]{alexanderian2018efficient}
\[ 0  \le \logdet(\M{I}+\M{D}) - \logdet(\M{I}+\M{C}) \le \logdet(\M{I} + \M{D}-\M{C}). \]  
We will also need Sylvester's determinant identity
\[ \logdet(\M{I}+\M{AA}\t) = \logdet(\M{I}+\M{A}\t\M{A}). \]
This can be seen for example from the fact that $\M{AA}\t$ and $\M{A}\t\M{A}$ have the same nonzero eigenvalues.

\section{Proof of NP-Hardness}\label{sec:nphard}
In this section, we give a proof of \cref{prop:doptnp}. We restate the decision problem in a slightly different form:
\begin{description}
    \item [Problem] D-optimal sensor placement.
    \item \revised{[Instance] Let $\M{A} \in \R^{M\times N}$ with rank at least $k$, and $\tau \in \R_+$}.
    \item \revised{[Question] Does there exist a matrix $\M{C}\in \R^{M\times k}$ that contains columns from the matrix $\M{A}$, i.e., $\M{C} = \M{A}(:, S)$, such that $\phi(S) \ge \tau$?}
\end{description}

We give a reduction based on the `exact cover by 3-sets' (X3C) problem which is known to be NP-complete. This proof technique is based on Civril and Magdon-Ismail~\cite{civril2009selecting}.
\begin{description}
    \item [Problem] Exact cover by 3-sets (X3C).
    \item [Instance] A set $\mc{S} = \{1,\dots,3m\}$ and a collection of subsets $\mc{K} = \{\mc{K}_1,\dots,\mc{K}_n\}$ each with cardinality $3$.  
    \item [Question]  Is there a subset $\mc{K}'\subset \mc{K}$ that forms an exact cover for $\mc{S}$ (Exact cover means every element of $\mc{S}$ appears exactly once in $\mc{K}'$.)?
    \end{description}

Before we launch into the proof of \cref{prop:doptnp}, we need the following lemma. 
\begin{lemma}\label{lem:orth} Let $\M{Z}\in \R^{M \times k}$ be a matrix with $k \le M$ and with column norms $1$. Then $\logdet(\M{I}+\M{ZZ}\t) \le k\log 2$ with equality if and only if columns of $\M{Z}$ are orthonormal.
\end{lemma}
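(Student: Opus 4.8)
The plan is to reduce the claim to a one-dimensional concavity argument. First I would invoke Sylvester's determinant identity (stated in \Cref{sec:back}) to rewrite $\logdet(\M{I}+\M{Z}\M{Z}\t) = \logdet(\M{I}_k + \M{Z}\t\M{Z})$, trading the $M \times M$ expression for one governed by the $k \times k$ Gram matrix $\M{G} \equiv \M{Z}\t\M{Z}$. The key structural observation is that the diagonal entries of $\M{G}$ are exactly the squared column norms of $\M{Z}$, each equal to $1$ by hypothesis, so $\trace(\M{G}) = k$. Since $\M{G}$ is symmetric positive semidefinite, I let $\lambda_1,\dots,\lambda_k \ge 0$ denote its eigenvalues; they satisfy $\sum_{i=1}^k \lambda_i = \trace(\M{G}) = k$.

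Next I would express the objective spectrally as $\logdet(\M{I}_k + \M{G}) = \sum_{i=1}^k \log(1+\lambda_i)$ and apply Jensen's inequality using the strict concavity of $x \mapsto \log(1+x)$ on $[0,\infty)$. This yields
\[ \frac{1}{k}\sum_{i=1}^k \log(1+\lambda_i) \le \log\left(1 + \frac{1}{k}\sum_{i=1}^k \lambda_i\right) = \log 2, \]
which rearranges to the desired bound $\logdet(\M{I}+\M{Z}\M{Z}\t) \le k\log 2$.

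For the equality characterization I would exploit the strictness of the concavity: equality in Jensen forces all eigenvalues to coincide, and combined with $\sum_{i} \lambda_i = k$ this pins down $\lambda_i = 1$ for every $i$. Hence $\M{G} = \M{Z}\t\M{Z} = \M{I}_k$, which is precisely the statement that the columns of $\M{Z}$ are orthonormal; conversely, orthonormal columns give $\M{G} = \M{I}_k$ and saturate the bound. I do not anticipate a genuine obstacle here, since the entire argument rests on a single convexity estimate together with the trace constraint; the only point demanding care is the equality direction, where I must lean on \emph{strict} concavity to rule out unequal eigenvalues rather than merely asserting the inequality is tight.
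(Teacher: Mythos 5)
Your proof is correct, and it differs from the paper's mainly in the key inequality used for the upper bound. Both arguments open with Sylvester's identity to pass to the $k\times k$ Gram matrix $\M{Z}\t\M{Z}$. The paper then bounds $\det(\M{I}_k+\M{Z}\t\M{Z})$ by Hadamard's determinant inequality: the diagonal entries of $\M{I}_k+\M{Z}\t\M{Z}$ are all $1+\|\V{z}_j\|_2^2=2$, so the determinant is at most $2^k$ without any reference to the spectrum or the trace constraint. You instead diagonalize, use $\sum_i\lambda_i=\trace(\M{Z}\t\M{Z})=k$, and apply Jensen to the strictly concave map $x\mapsto\log(1+x)$. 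Both are sound; Hadamard reaches the bound more directly, while your Jensen route has the advantage that the equality characterization falls out of the same estimate, since strict concavity forces $\lambda_1=\dots=\lambda_k$ and the trace constraint then pins each to $1$, giving $\M{Z}\t\M{Z}=\M{I}_k$. The paper handles equality by a separate argument built on the nonnegative function $g(x)=(x-1)/2-\log((x+1)/2)$ combined with the trace identity; that function is precisely the tangent-line form of the concavity of $\log(1+x)$ at $x=1$, so your equality analysis and the paper's are the same underlying fact in different packaging. No gaps.
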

\begin{proof}
    Let us denote the columns of $\M{Z} = \bmat{\V{z}_1 & \dots & \V{z}_k}$ and  singular values of $\M{Z}$ as $\sigma_1 \ge \dots \sigma_k \ge 0$.  Then  $\|\M{Z}\|_F^2 = \sum_{j=1}^k\|\V{z}_j\|_2^2 = \sum_{j=1}^k\sigma_j^2  = {k}$ and 
    \[ \logdet(\M{I}+\M{ZZ}\t) = \logdet(\M{I}+\M{Z}\t \M{Z}) \le \sum_{j=1}^k\log(1+\|\V{z}_j\|_2^2) =  k\log 2, \]
    where in the first step we used Sylvester's determinant identity and in the second step we used Hadamard's determinant inequality~\cite[Theorem 7.8.1]{horn2013matrix}. This establishes the inequality. We now consider the case of the equality. If $\M{Z}$ has orthonormal columns, all its singular values are $1$, so equality is achieved. To show the converse, consider the equality
    \[ \logdet(\M{I}+\M{ZZ}\t) =\sum_{j=1}^k\log(1+\sigma_j^2) = k \log 2,\]
    which we can rewrite as $\prod_{j=1}^k(1+\sigma_j^2) = 2^k$ or $\sum_{j=1}^k \log ( (\sigma_j^2+1)/2) = 0$. Together with $\sum_{j=1}^k\sigma_j^2  = {k}$, we have 
    \[ \sum_{j=1}^k \left[\frac{\sigma_j^2 - 1}{2} - \log ( (\sigma_j^2+1)/2) \right] = 0. \]
    Consider the function $g(x) = (x-1)/2 - \log((x+1)/2)$. This function is nonnegative and it has a unique global minimizer at $x = 1$ and $g(1)= 0$. This implies that the only solution to $\sum_{j=1}^k g(\sigma_j^2) = 0$ is $\sigma_1 = \dots = \sigma_k= 1$, or $\M{Z}$ has orthonormal columns. 
\end{proof}

We are ready to prove \cref{prop:doptnp}. 
\revised{We show that every instance of X3C is true if and only if the corresponding decision problem for D-optimal sensor placement is true.}
To this end, we construct the matrix $\M{A}\in \R^{3m \times n}$  with entries ($1 \le i \le 3m$, $1\le j \le n$)
\[a_{ij} = \left\{\begin{array}{ll}\frac{1}{\sqrt{3}} & i \in \mc{K}_j  \\ 0 & \text{otherwise}.\end{array} \right.\]
Thus, in this instance of X3C, $M = 3m$, $N=n$, \revised{$k=m$, and $\tau = m \log 2$}. We note that the columns of $\M{A}$ have norm $1$ so \cref{lem:orth} applies. 

From the proof of~\cite[Theorem 4]{civril2009selecting}, the instance of X3C is true if and only there exists a matrix $\hat{\M{C}}\in \R^{3m\times m}$ (containing columns from $\M{A}$ \revised{indexed by set $\hat{S}$}) with orthonormal columns.  
\revised{
By \cref{lem:orth}, this is equivalent to showing that $\phi({\hat{S}}) = \tau = m \log 2$.
Therefore, the instance of X3C is true if and only if there is a set $\hat{S}$ with $\phi(\hat{S}) = m \log 2$.  
}
This completes the proof.

\section{Alternatives}\label{sec:alternatives}
In \Cref{ssec:altgks}, we give an alternative to the \ac{gks} bounds proposed in \Cref{ssec:oed_cssp}, and in \Cref{ssec:maxvol} we give some alternative techniques using maximum-volume. In \Cref{ssec:proofdetails}, we give additional details of the proof of \cref{thm:randoed}.
\subsection{Alternatives to the GKS bounds}\label{ssec:altgks} 

\Cref{ssec:oed_cssp,ssec:struct} show two different lower bounds for 
\revised{$\phi(S)$}.
The first is the \ac{gks} bounds which can be extended to the top-$k$ singular values as follows.
Let the truncated \ac{svd} of $\M{A}$ be $\M{U}_k\M{\Sigma}_k\M{V}_k^\Tra$ with $\M{V}_k \in \Rn{m \times k}$.
Let $\M{V}_k$ be partitioned exactly as \cref{eqn:vkpart} into
$$
\M{V}_k^\Tra\bmat{\M{\Pi}_1 & \M{\Pi}_2} = \bmat{\M{V}_{11} & \M{V}_{22}}.
$$
Now let us denote the $\ell\times\ell$ principal submatrix of $\M{V}_{11}$ as $\M{V}_{11}(1:\ell,1:\ell) = \M{V}_{(\ell,\ell)}$.
Then by the \ac{gks} bounds we have~\cite[Theorem 5.5.2]{golub2012matrix},
\begin{equation}
\frac{\sigma_j\pr{\M{A}}}{\norm{(\M{V}_{(j,j)})^{-1}}_2} \le \sigma_j\pr{\M{C}} \le \sigma_j\pr{\M{A}}, \qquad 1 \le j \le k.
\label{eqn:gvlbounds}
\end{equation}
Notice that the block matrix varies for each singular value making it unwieldy to apply for the D-optimal criterion as in \cref{cor:srrqraksbounds}.
Contrast this with the bounds in \cref{thm:aksbounds} which gives us
\begin{equation}
\frac{\sigma_j\pr{\M{A}}}{\norm{\M{V}_{11}^{-1}}_2} \le \sigma_j\pr{\M{C}} \le \sigma_j\pr{\M{A}}, \qquad 1 \le j \le k.
\label{eqn:aksbounds}
\end{equation}
In situations where a tighter lower bound for 
$\revised{\phi(S)}$ is desired, one can compare \cref{eqn:gvlbounds,eqn:aksbounds} for every singular value.
Formally, let define $$d_j = 1/\min\pr{\norm{\M{V}_{(j,j)}^{-1}}_2, \norm{\M{V}_{11}^{-1}}_2} \qquad 1 \le j \le k$$ and let $\M{D} = \diag{d_1,\dots,d_k} \in \Rn{k \times k}$.
Then using similar arguments as those in \Cref{ssec:csspproofs} we get
$$
\revised{
\Psi(\M{\Sigma}_k\M{D}) \le \phi(S) \le \phi(S^{\rm opt}) \le \Psi(\M{\Sigma}_k) \le \phi(\br{m}).
}
$$

\subsection{Maximum volume-based approaches}\label{ssec:maxvol} The connection to maximum volume suggests some alternative approaches. That is, we should select columns of $\Mh{A} \equiv \bmat{ \M{I} & \M{A}\t}\t$, rather than directly select columns of $\M{A}$. First notice that the singular values of $\Mh{A}$ are related to $\M{A}$ as $$\sigma_i(\Mh{A}) = \sqrt{1 + \sigma_i(\M{A})^2}\quad \text{for} \quad 1\le i \le \min\{m,n\}. $$ 
If we apply \ac{srrqr} with parameter $f \ge 1$ to $\Mh{A}$ then we get a matrix $\Mh{C}$ with $k$ columns from $\Mh{C}$ of the form   
\[ \Mh{C} = \bmat{ * \\ \M{C}} \in \R^{(n+m)\times k},  \]
where $\M{C}$ contains $k$ columns from $\M{A}$. The block denoted by $*$ is zero except for $k$ rows which contain the $k\times k$ identity matrix. From the \ac{srrqr} bounds 
\[ \frac{\sigma_i(\Mh{A}) }{q_f(m,k)} \le \sigma_i( \Mh{C} )\le \sigma_i(\Mh{A}) \qquad 1 \le i \le k.  \]
This leads to the bounds 
\revised{
$$ \Psi(\M\Sigma_k) - 2k\log q_f(m,k) \le \phi(S) \le \Psi(\M\Sigma_k). $$
}
That is, we get an additive bound rather than a multiplicative bound. Since the factor $q_f(m,k) \ge 1$, it is easy to see that 
\revised{$\Psi(\M\Sigma_k/q_f(m,k)) \ge \Psi(\M\Sigma_k) - 2k\log q_f(m,k)$.}
Therefore, this approach produces a looser upper bound, so we did not investigate this further. We leave it to future work to find approaches that exploit this connection to yield better results.

\subsection{Details of the proof of \cref{thm:randoed}}\label{ssec:proofdetails}
We want to provide details of the statement $\|\M\Omega\|_2 \|(\M{U}_k\M\Omega)^\dagger\|_2 \le C_g $ with probability at least $1-\delta$. From~\cite[Proposition 10.2]{halko2011finding}, $\mathbb{E}[\|\M\Omega\|_2] \le \sqrt{\frac{n}{d}} + 1$. Next, $\|\cdot\|_2$ is a Lipschitz function with Lipschitz constant $1$, so by the previous result and~\cite[Proposition 10.3]{halko2011finding} 
\[\mathbb{P}\left\{\|\M\Omega\|_2 \ge \sqrt{\frac{n}{d}} + 1  + t  \right\} \le e^{-t^2/2}.  \]
Set $e^{-t^2/2} = \delta/2$ to obtain $t =\sqrt{2\log(2/\delta)}$. Therefore, $\|\M\Omega\|_2 \le  \sqrt{\frac{n}{d}} + 1  +\sqrt{2\log(2/\delta)}$ with probability at least $1-\delta/2$.

Next, we tackle $(\M\Omega \M{U}_k)$, which we can write as $\M{G}\M{U}_k/\sqrt{d}$ where $\M{G} \in \R^{d\times n}$ is a standard Gaussian matrix (i.i.d. entries from $\mathcal{N}(0,1)$). By the rotational invariance, $\M{G}\M{U}_k \in \R^{d\times k}$ is also a standard Gaussian random matrix. Therefore, by~\cite[Proposition 10.4]
{halko2011finding} 
\[ \mathbb{P}\left\{\|(\M\Omega\M{U}_k)^\dagger\|_2 \ge {\sqrt{d}} \cdot \frac{e\sqrt{d}}{p+1}\cdot t  \right\} \le t^{-(p+1)}.\]
Once again, set $t^{-(p+1)} = \delta/2$, to get $t = (2/\delta)^{1/(p+1)}$. Therefore, $\|(\M\Omega\M{U}_k)^\dagger\|_2 \le  {\sqrt{d}} \cdot \frac{e\sqrt{d}}{p+1}\cdot(2/\delta)^{1/(p+1)} $ with probability at least $1-\delta/2$. The proof is completed by a simple union bound argument.

\section{Additional numerical experiments}\label{sec:addexp}
Additional experiments on the same test problems from \Cref{sec:exp}.

\subsection{Summary of algorithmic choices}\label{ssec:setup} We summarize the choices of problem settings that is used in the comparison of algorithms below.
\begin{enumerate}
    \item \textbf{RandGKS}: We implemented \cref{alg:detcssp} where a randomized SVD algorithm (with $q=1$ subspace iterations and oversampling $p=20$) is used to compute the right singular vectors of $\M{A}$ and QR with column pivoting (QRCP) is used for computing the pivot indices that determine the sensor placement.
    \item \textbf{Hybrid}: This is the randomized point selection method described in \cref{alg:leverage_sampling} and then QRCP is used in the second stage of the method to cut down the sampled sensors to exactly the requested number of sensors. The randomized SVD was run with the same parameters as in RandGKS. We use $s = \min\{ \lceil k \log k\rceil, m\}$, the sampling probabilities $\{\pi_j^\beta\}_{j=1}^m$, where $\pi_j^m = \beta \frac{\tau_j}{k} +(1-\beta)\frac{1}{m}$ for $1 \le j \le m$ and $\beta=0.9$ in the numerical experiments.
    \item \textbf{Greedy}: In this approach, we greedily pick columns of $\M{A}$ to maximize the D-optimal criterion. 
    Given indices $I = \{i_1,\dots,i_t\}$ at step $t$, the method selects an index $i_{t+1}$ from $\{1,\dots,m\} \setminus I $ such that the log determinant of $\M{I}+\M{XX}\t$ is maximized, where $\M{X} = \bmat{\V{a}_{i_1} & \dots & \V{a}_{i_{t+1}}}. $ 
    To compute a column norm, we require one matvec with $\M{A}$ to extract the column. 
    Furthermore, this method is expensive compared to the other approaches for large $m$, as it requires $O(mk)$ matvecs with $\M{A}$. 
    \item \textbf{RAF}: This is \cref{alg:randoed} where \ac{qrcp} is used for computing the pivot indices directly from the sketch $\M{Y} = \M{\Omega}\M{A}$. We set $p = 20$ as the oversampling parameter.
\end{enumerate}

\subsection{Additional results for Seismic Tomography}We repeat the experiments from \cref{sec:exp} for the Seismic tomography problem.
The results largely mimic the findings from the Heat problem.
\Cref{fig:seismicvaryk} shows the performance of the RandGKS algorithm as we increase $k$. 
We see diminishing returns in terms of relative errors after 30 sensors, but improving D-optimality (reduction in uncertainty) as we add more sensors.
Next we show the performance of RandGKS versus 100 random sensor selections for $k = 10$ and $k = 50$ in \cref{fig:seismiccmprand}.
RandGKS performs extremely well against the random selections, beating them all in D-optimality and doing extremely well in reconstruction errors as well.

\begin{figure}[!ht]
    \centering
    \includegraphics[width=0.5\textwidth]{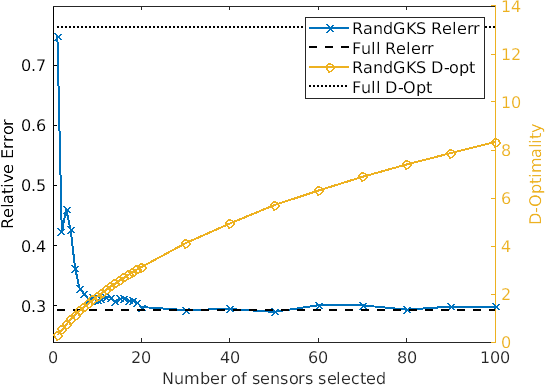}
    \caption{D-optimal criterion and relative error in the RandGKS algorithm with increasing number of sensors. Relative error stagnates around 30 sensors while D-optimality keeps increasing with more sensors.}
    \label{fig:seismicvaryk}
\end{figure}

\begin{figure}[!ht]
    \centering
    \includegraphics[scale=0.38]{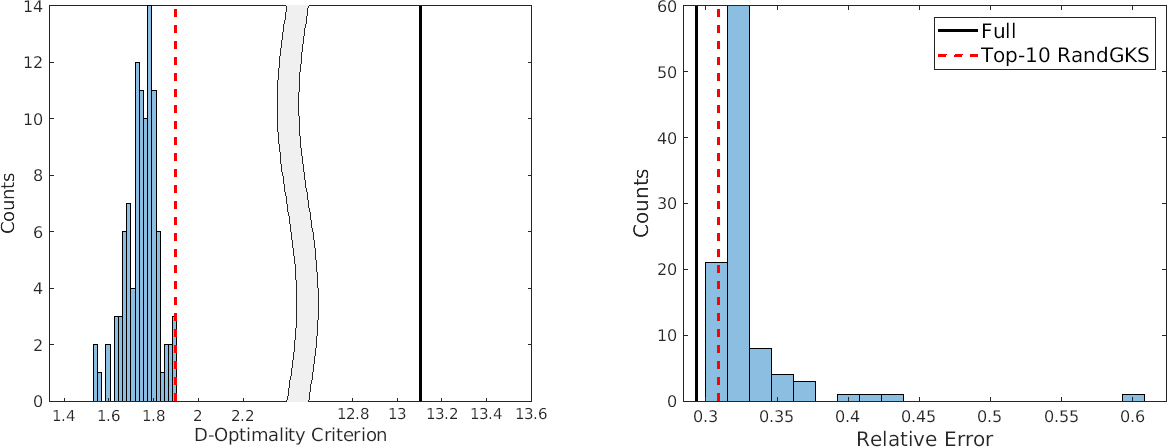}
    \includegraphics[scale=0.38]{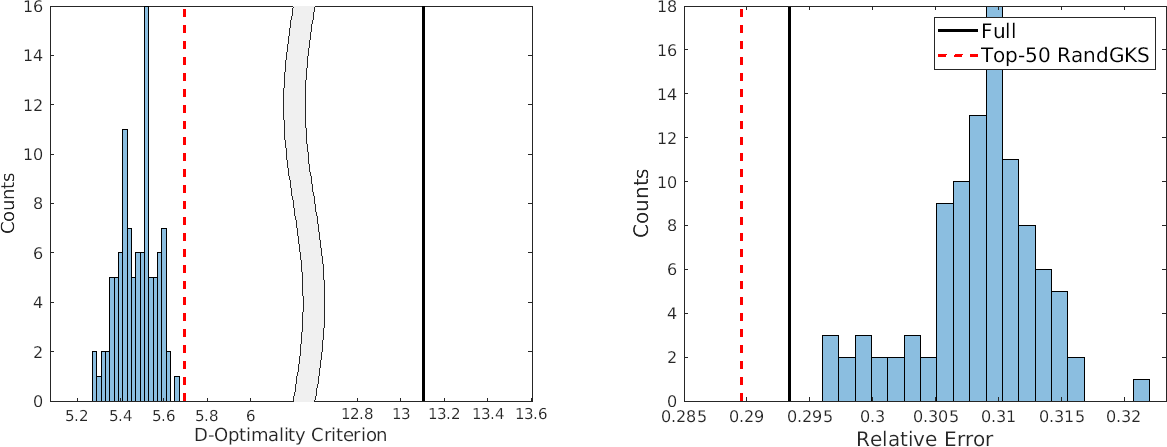}
    \caption{Evaluating the RandGKS algorithm against random sensor placements in the Seismic problem. 100 random sensor selections, with $k=10$ or $k=50$, are used to plot the D-optimality and relative error histograms. RandGKS always has a high D-optimality and reasonable relative error for all $k$.}
    \label{fig:seismiccmprand}
\end{figure}

\subsection{Additional comparison of algorithms}
We expand \cref{tbl:algcmp} for more values of $k$ for both test problems in \cref{tbl:diffusionalgcmp,tbl:seismicalgcmp}.
In addition to D-optimality and relative error, we also report $\norm{\M{V}_{11}^{-1}}_2$ for both test problems.
Recall from \cref{thm:aksbounds}, that this norm appears in the lower bound for the D-optimal criterion. Greedy has the highest D-optimality but other methods produce comparable results. As is seen from the timing results, Greedy is considerably more expensive. 

\begin{table}[!ht]
\centering
\caption{Performance of the different algorithms on the Heat test problem.}
\begin{tabular}{c|c|c|c|c}
\toprule
$k$                 & Algorithm & D-optimality & Relative Error & $\norm{\M{V}_{11}^{-1}}_2$ \\
\midrule
 ---                & Full      & 12.0491      & 0.1015         & ---          \\
\midrule
\multirow{4}{*}{5} & RandGKS & 2.0815 & 0.2144 & 6.0056 \\
                   & Hybrid  & 1.6742 & 0.4214 & 8.3106 \\
                   & Greedy  & 2.1253 & 0.2365 & 18.281 \\
                   & RAF     & 1.9467 & 0.3129 & 23.73  \\
\midrule
\multirow{4}{*}{10} & RandGKS & 3.3966 & 0.1758 & 7.9548   \\
                    & Hybrid  & 3.2382 & 0.2212 & 10.084   \\
                    & Greedy  & 3.5061 & 0.1768 & $\infty$ \\
                    & RAF     & 3.3946 & 0.1721 & 54.074   \\
\midrule
\multirow{4}{*}{20} & RandGKS & 5.2074 & 0.1307 & 5.6349   \\
                    & Hybrid  & 5.1778 & 0.1283 & 5.7384   \\
                    & Greedy  & 5.5779 & 0.1502 & $\infty$ \\
                    & RAF     & 5.2897 & 0.1268 & 11.251   \\
\midrule
\multirow{4}{*}{40} & RandGKS & 7.9544 & 0.1204 & 6.3169   \\
                    & Hybrid  & 7.8053 & 0.1131 & 15.893   \\
                    & Greedy  & 8.3063 & 0.1298 & $\infty$ \\
                    & RAF     & 7.7388 & 0.1075 & 97.507   \\
\midrule
\multirow{4}{*}{50} & RandGKS & 8.8605 & 0.1121 & 7.2188   \\
                    & Hybrid  & 8.7376 & 0.1154 & 23.067   \\
                    & Greedy  & 9.2042 & 0.1229 & $\infty$ \\
                    & RAF     & 8.9459 & 0.1118 & 1166.3   \\
\bottomrule
\end{tabular}
\label{tbl:diffusionalgcmp}
\end{table}

\begin{table}[!ht]
\centering
\caption{Performance of the different algorithms on the Seismic test problem.}
\begin{tabular}{c|c|c|c|c}
\toprule
$k$                 & Algorithm & D-optimality & Relative Error & $\norm{\M{V}_{11}^{-1}}_2$ \\
\midrule
 ---                & Full      & 13.1040      & 0.2934         & ---          \\
\midrule
\multirow{4}{*}{10} & RandGKS & 1.8969 & 0.3089 & 5.8451     \\
                    & Hybrid  & 1.8398 & 0.3092 & 14.004     \\
                    & Greedy  & 2.0442 & 0.4022 & 2.0046e+05 \\
                    & RAF     & 1.8735 & 0.3097 & 12.644     \\
\midrule
\multirow{4}{*}{20} & RandGKS & 3.1218 & 0.2977 & 4.2127     \\
                    & Hybrid  & 3.0885 & 0.3007 & 8.6842     \\
                    & Greedy  & 3.3450 & 0.3227 & 1.8129e+07 \\
                    & RAF     & 3.1315 & 0.3042 & 20.929     \\
\midrule
\multirow{4}{*}{40} & RandGKS & 4.9363 & 0.2945 & 3.1561     \\
                    & Hybrid  & 4.9381 & 0.3034 & 4.6546     \\
                    & Greedy  & 5.1947 & 0.2952 & 2.7784e+07 \\
                    & RAF     & 4.9676 & 0.2992 & 88.5       \\
\midrule
\multirow{4}{*}{60} & RandGKS & 6.3059 & 0.3006 & 2.9216     \\
                    & Hybrid  & 6.3087 & 0.3022 & 5.5387     \\
                    & Greedy  & 6.5605 & 0.2958 & 4.6921e+09 \\
                    & RAF     & 6.3061 & 0.2961 & 13.052     \\
\midrule
\multirow{4}{*}{80} & RandGKS & 7.3967 & 0.2933 & 7.0063     \\
                    & Hybrid  & 7.3979 & 0.2942 & 7.9496     \\
                    & Greedy  & 7.6556 & 0.2926 & 3.2542e+09 \\
                    & RAF     & 7.4074 & 0.2929 & 558.23     \\
\bottomrule
\end{tabular}
\label{tbl:seismicalgcmp}
\end{table}

\subsection{Additional run time experiments}\label{ssec:timing}
We compare the running times of the different algorithms for both the test problems. All our experiments were conducted on a laptop computer with a 12th Gen Intel$^{\text{\textregistered}}$
 Core(TM) i7-1280P processor and 32 GB of memory using Matlab (version R2023b). We report the wall clock time for different operations in \cref{tbl:diffusiontimes,tbl:seismictimes}. These timings were collected over 10 different runs and the median time and standard deviation are reported.

\begin{table}[!ht]
\centering
\caption{Timing results of the different algorithms on the Heat test problem.}
\begin{tabular}{c|c|c|c|c}
\toprule
k                   & Algorithm        & \begin{tabular}[c]{@{}c@{}}Matrix-free time\\ median ($\pm$ std)\end{tabular} & PDE solves & \begin{tabular}[c]{@{}c@{}}Dense time\\ median ($\pm$ std)\end{tabular} \\
\midrule
---                 & Forward Op    & 0.4341 ($\pm$ 0.0146)     & ---  & 3.90e-05 ($\pm$ 4.95e-05) \\
---                 & Adjoint Op    & 0.4458 ($\pm$ 0.0086)     & ---  & 3.85e-05 ($\pm$ 4.09e-05) \\
\midrule
\multirow{4}{*}{10} & RandGKS       & 70.1244 ($\pm$ 2.2287)    & 120   & 0.0053 ($\pm$ 0.0023)     \\
                    & Hybrid        & 68.9826 ($\pm$ 6.5980)    & 120   & 0.0058 ($\pm$ 0.0017)     \\
                    & Greedy        & 510.9801 ($\pm$ 4.1313)   & 955  & 0.1472 ($\pm$ 0.0301)     \\
                    & RAF           & 21.5377 ($\pm$ 1.0847)    & 30   & 0.0034 ($\pm$ 0.0010)     \\
\midrule
\multirow{4}{*}{30} & RandGKS       & 124.5957 ($\pm$ 1.5266)   & 200  & 0.0084 ($\pm$ 0.0009)     \\
                    & Hybrid        & 122.876 ($\pm$ 1.1271)    & 100  & 0.0078 ($\pm$ 0.0016)     \\
                    & Greedy        & 1348.8915 ($\pm$ 21.7416) & 2565 & 0.2843 ($\pm$ 0.0378)     \\
                    & RAF           & 41.6364 ($\pm$ 0.2008)    & 50   & 0.0034 ($\pm$ 0.0004)     \\
\bottomrule
\end{tabular}
\label{tbl:diffusiontimes}
\end{table}

\begin{table}[!ht]
\caption{Timing results of the different algorithms on the Seismic test problem.}
\begin{tabular}{c|c|c|c|c}
\toprule
k                   & Algorithm  & \begin{tabular}[c]{@{}c@{}}Matrix-free time\\ median (std)\end{tabular} & PDE solves & \begin{tabular}[c]{@{}c@{}}Dense time\\ median (std)\end{tabular} \\
\midrule
---                 & Forward Op & 0.0056 ($\pm$ 0.0009)  & ---        & 7.45e-05 ($\pm$ 1.03e-04)  \\
---                 & Adjoint Op & 0.0048 ($\pm$ 0.0004)  & ---        & 3.85e-05 ($\pm$ 4.09e-05)  \\
\midrule
\multirow{4}{*}{10} & RandGKS    & 0.6245 ($\pm$ 0.0320)  & 120         & 0.0078 ($\pm$ 0.0009)      \\
                    & Hybrid     & 0.6194 ($\pm$ 0.0227)  & 120         & 0.0086 ($\pm$ 0.0005)      \\
                    & Greedy     & 14.1190 ($\pm$0.5570)  & 2515       & 0.2603 ($\pm$0.0078)       \\
                    & RAF        & 0.2191 ($\pm$ 0.0049)  & 30         & 0.0029 ($\pm$ 0.0003)      \\
\midrule
\multirow{4}{*}{50} & RandGKS    & 1.9811 ($\pm$ 0.1418)  & 280        & 0.0195 ($\pm$ 0.0027)      \\
                    & Hybrid     & 2.1143 ($\pm$ 0.3147)  & 280        & 0.0205 ($\pm$ 0.0037)      \\
                    & Greedy     & 66.5252 ($\pm$ 3.0533) & 11575      & 2.4353 ($\pm$ 1.0554)      \\
                    & RAF        & 0.6742 ($\pm$ 0.0098)  & 70         & 0.0075 ($\pm$ 0.0004)      \\
\bottomrule
\end{tabular}
\label{tbl:seismictimes}
\end{table}

\begin{figure}[!ht]
    \centering
    \includegraphics[width=0.47\textwidth]{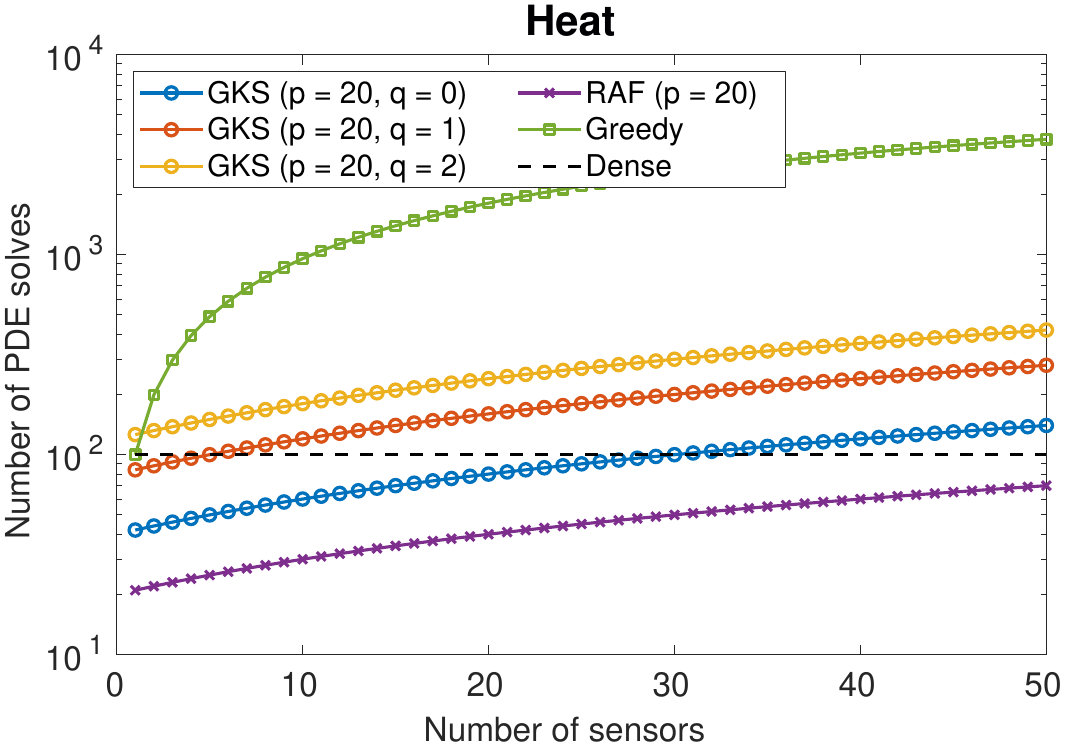}
    \includegraphics[width=0.47\textwidth]{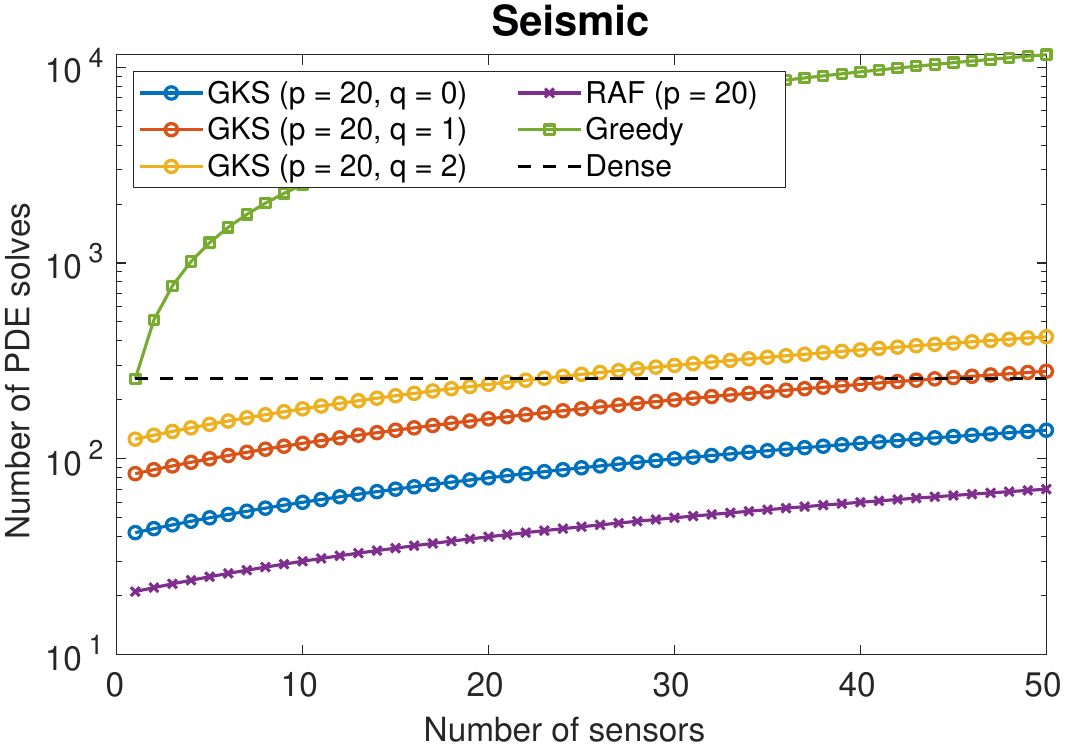}
    \caption{Scaling the number of PDE solves with the number of sensors selected. Notice that the Greedy algorithm performs a lot more PDE solves, up to two orders of magnitude more, than the randomized methods.}
    \label{fig:matvecsvsk}
\end{figure}

From \cref{tbl:diffusiontimes,tbl:seismictimes} it clear that Randomized Adjoint-free OED (RAF-OED) is the fastest method among the four algorithms tested. RandGKS and Hybrid take the next two spots with the Greedy method lagging significantly behind the three randomized methods. The reason for this can be seen from the number of PDE solves incurred by the Greedy method which is an order of magnitude larger than the other methods. 

\Cref{fig:matvecsvsk} shows the growth in the number of PDE solves for the different methods as we increase the number of sensors selected $k$ for our test problems. The number of candidate locations, $m$, is 100 and 256 for the Heat and Seismic problems respectively. 
The randomized SVD requires $(2q + 2)(k + p)$ matvecs for a given oversampling parameter $p$ and subspace iterations $q$.
The Greedy method performs $mk - k(k+1)/2$ solves for a given $k$.
Finally, RAF-OED needs only $k+p$ solves.
Notice that only the Greedy method grows as $O(mk)$ whereas all the other methods require only $O(k)$ solves.
From \cref{tbl:diffusiontimes,tbl:seismictimes} and \cref{fig:matvecsvsk}, it is evident that Greedy method is substantially slower.

Since our problems are of modest size, we can simply ``densify'' the matrices using $m$ PDE solves. This is not possible for larger problems due to the prohibitive memory costs associated with storing the entire forward operator. Even in this setting, we can see from \cref{tbl:diffusiontimes,tbl:seismictimes} that our proposed methods outperform the Greedy method. From \cref{fig:matvecsvsk}, we can see that the matrix-free versions of the randomized algorithms requires fewer solves than the dense greedy method for small values of $k$.

\section{Software availability}
\label{sec:codeloc}
A software package containing all the scripts and instructions needed to reproduce the numerical experiments in \Cref{sec:exp,sec:addexp} can be found in the Github repository\footnote{\url{https://github.com/RandomizedOED/css4oed}}.
Both Matlab and Python scripts are provided which are compatible with the AIR Tools II/IR Tools~\cite{hansen2018air,gazzola2019ir} and PyOED~\cite{chowdhary2023pyoed} packages respectively.

\bibliographystyle{siamplain}
\bibliography{references}
\end{document}